\begin{document}

%%%%%%%%%%%%%%%%%%%%%%%%%%%%%%%%%%%%%%%%%%%%%%%%%%%%%%%%%%%%%%%%%%%%%%
%% Title and Author Information

\title[Height Estimates for Rational Maps]
{Height Estimates for Equidimensional Dominant Rational Maps}
\date{July 2009}
\author[Joseph Silverman]{Joseph H. Silverman}
\email{jhs@math.brown.edu}
\address{Mathematics Department, Box 1917
         Brown University, Providence, RI 02912 USA}
\subjclass{Primary: 11G50; Secondary:  14G40}
\keywords{height function, rational map}
\thanks{The author's research supported by NSF DMS-0650017 and
DMS-0854755.}

%%%%%%%%%%%%%%%%%%%%%%%%%%%%%%%%%%%%%%%%%%%%%%%%%%%%%%%%%%%%%%%%%%%%%%

\allowdisplaybreaks

\hyphenation{ca-non-i-cal semi-abel-ian}

%%%%%%%%%%%%%%%%%%%%%%%%%%%%%%%%%%%%%%%%%%%%%%%%%%%%%%%%%%%%%%%%%%%%%%
% Theorem environments

\newtheorem{theorem}{Theorem}
\newtheorem{lemma}[theorem]{Lemma}
\newtheorem{conjecture}[theorem]{Conjecture}
\newtheorem{proposition}[theorem]{Proposition}
\newtheorem{corollary}[theorem]{Corollary}
\newtheorem*{claim}{Claim}

\theoremstyle{definition}
% The * surpresses numbering
\newtheorem*{definition}{Definition}
\newtheorem{example}[theorem]{Example}

\theoremstyle{remark}
\newtheorem{remark}[theorem]{Remark}
\newtheorem{question}[theorem]{Question}
\newtheorem*{acknowledgement}{Acknowledgements}

%%%%%%%%%%%%%%%%%%%%%%%%%%%%%%%%%%%%%%%%%%%%%%%%%%%%%%%%%%%%%%%%%%%%%%

%%%%%%%% Set Up Environment for Notation %%%%%%%%%%%%%%
% This is currently set to allow quite wide items to be defined
\newenvironment{notation}[0]{%
  \begin{list}%
    {}%
    {\setlength{\itemindent}{0pt}
     \setlength{\labelwidth}{4\parindent}
     \setlength{\labelsep}{\parindent}
     \setlength{\leftmargin}{5\parindent}
     \setlength{\itemsep}{0pt}
     }%
   }%
  {\end{list}}

%%%%%%%% Set Up Environment for Parts in Theorems %%%%%%%%%%%%%%
\newenvironment{parts}[0]{%
  \begin{list}{}%
    {\setlength{\itemindent}{0pt}
     \setlength{\labelwidth}{1.5\parindent}
     \setlength{\labelsep}{.5\parindent}
     \setlength{\leftmargin}{2\parindent}
     \setlength{\itemsep}{0pt}
     }%
   }%
  {\end{list}}
% Use \Part{(a)}, instead of \item[(a)], to ensure upright font
\newcommand{\Part}[1]{\item[\upshape#1]}

%%%%%%%%%%%%%%%%%%
% Greek Alphabet %
%%%%%%%%%%%%%%%%%%
\renewcommand{\a}{\alpha}
\renewcommand{\b}{\beta}
\newcommand{\g}{\gamma}
\renewcommand{\d}{\delta}
\newcommand{\e}{\epsilon}
\newcommand{\f}{\varphi}
\newcommand{\bfphi}{{\boldsymbol{\f}}}
\renewcommand{\l}{\lambda}
\renewcommand{\k}{\kappa}
\newcommand{\lhat}{\hat\lambda}
\newcommand{\m}{\mu}
\newcommand{\bfmu}{{\boldsymbol{\mu}}}
\renewcommand{\o}{\omega}
\renewcommand{\r}{\rho}
\newcommand{\rbar}{{\bar\rho}}
\newcommand{\s}{\sigma}
\newcommand{\sbar}{{\bar\sigma}}
\renewcommand{\t}{\tau}
\newcommand{\z}{\zeta}

\newcommand{\D}{\Delta}
\newcommand{\G}{\Gamma}
\newcommand{\F}{\Phi}

%%%%%%%%%%%%%%%%%%%%
% Fraktur Alphabet %
%%%%%%%%%%%%%%%%%%%%
\newcommand{\ga}{{\mathfrak{a}}}
\newcommand{\gb}{{\mathfrak{b}}}
\newcommand{\gn}{{\mathfrak{n}}}
\newcommand{\gp}{{\mathfrak{p}}}
\newcommand{\gP}{{\mathfrak{P}}}
\newcommand{\gq}{{\mathfrak{q}}}

%%%%%%%%%%%%%%%%%%%
% Barred Alphabet %
%%%%%%%%%%%%%%%%%%%
\newcommand{\Abar}{{\bar A}}
\newcommand{\Ebar}{{\bar E}}
\newcommand{\Kbar}{{\bar K}}
\newcommand{\Pbar}{{\bar P}}
\newcommand{\Sbar}{{\bar S}}
\newcommand{\Tbar}{{\bar T}}
\newcommand{\ybar}{{\bar y}}
\newcommand{\phibar}{{\bar\f}}

%%%%%%%%%%%%%%%%%%%%%%%%%
% Calligraphic Alphabet %
%%%%%%%%%%%%%%%%%%%%%%%%%
\newcommand{\Acal}{{\mathcal A}}
\newcommand{\Bcal}{{\mathcal B}}
\newcommand{\Ccal}{{\mathcal C}}
\newcommand{\Dcal}{{\mathcal D}}
\newcommand{\Ecal}{{\mathcal E}}
\newcommand{\Fcal}{{\mathcal F}}
\newcommand{\Gcal}{{\mathcal G}}
\newcommand{\Hcal}{{\mathcal H}}
\newcommand{\Ical}{{\mathcal I}}
\newcommand{\Jcal}{{\mathcal J}}
\newcommand{\Kcal}{{\mathcal K}}
\newcommand{\Lcal}{{\mathcal L}}
\newcommand{\Mcal}{{\mathcal M}}
\newcommand{\Ncal}{{\mathcal N}}
\newcommand{\Ocal}{{\mathcal O}}
\newcommand{\Pcal}{{\mathcal P}}
\newcommand{\Qcal}{{\mathcal Q}}
\newcommand{\Rcal}{{\mathcal R}}
\newcommand{\Scal}{{\mathcal S}}
\newcommand{\Tcal}{{\mathcal T}}
\newcommand{\Ucal}{{\mathcal U}}
\newcommand{\Vcal}{{\mathcal V}}
\newcommand{\Wcal}{{\mathcal W}}
\newcommand{\Xcal}{{\mathcal X}}
\newcommand{\Ycal}{{\mathcal Y}}
\newcommand{\Zcal}{{\mathcal Z}}

%%%%%%%%%%%%%%%%%%%%%%%%%%%%
% Blackboard Bold Alphabet %
%%%%%%%%%%%%%%%%%%%%%%%%%%%%
\renewcommand{\AA}{\mathbb{A}}
\newcommand{\BB}{\mathbb{B}}
\newcommand{\CC}{\mathbb{C}}
\newcommand{\FF}{\mathbb{F}}
\newcommand{\GG}{\mathbb{G}}
\newcommand{\PP}{\mathbb{P}}
\newcommand{\QQ}{\mathbb{Q}}
\newcommand{\RR}{\mathbb{R}}
\newcommand{\ZZ}{\mathbb{Z}}

%%%%%%%%%%%%%%%%%%%%%%%%%%
% Boldface Math Alphabet %
%%%%%%%%%%%%%%%%%%%%%%%%%%
\newcommand{\bfa}{{\mathbf a}}
\newcommand{\bfb}{{\mathbf b}}
\newcommand{\bfc}{{\mathbf c}}
\newcommand{\bfe}{{\mathbf e}}
\newcommand{\bff}{{\mathbf f}}
\newcommand{\bfg}{{\mathbf g}}
\newcommand{\bfp}{{\mathbf p}}
\newcommand{\bfr}{{\mathbf r}}
\newcommand{\bfs}{{\mathbf s}}
\newcommand{\bft}{{\mathbf t}}
\newcommand{\bfu}{{\mathbf u}}
\newcommand{\bfv}{{\mathbf v}}
\newcommand{\bfw}{{\mathbf w}}
\newcommand{\bfx}{{\mathbf x}}
\newcommand{\bfy}{{\mathbf y}}
\newcommand{\bfz}{{\mathbf z}}
\newcommand{\bfA}{{\mathbf A}}
\newcommand{\bfF}{{\mathbf F}}
\newcommand{\bfB}{{\mathbf B}}
\newcommand{\bfD}{{\mathbf D}}
\newcommand{\bfG}{{\mathbf G}}
\newcommand{\bfI}{{\mathbf I}}
\newcommand{\bfM}{{\mathbf M}}
\newcommand{\bfzero}{{\boldsymbol{0}}}

%%%%%%%%%%%%%%%%%%%%%%%%%%%%%%
% Miscellaneous New Commands %
%%%%%%%%%%%%%%%%%%%%%%%%%%%%%%
\newcommand{\Aut}{\operatorname{Aut}}
\newcommand{\Base}{\mathcal{B}} % indeterminacy (base) locus
\newcommand{\CM}{\operatorname{CM}}   % CM(O) = set of t with O_t = O
\newcommand{\codim}{\operatorname{codim}}
\newcommand{\Disc}{\operatorname{Disc}}
\newcommand{\Div}{\operatorname{Div}}
\newcommand{\Dom}{\operatorname{Dom}}
\newcommand{\Ell}{\operatorname{Ell}}   % Ell(O) = set of E with CM by O
\newcommand{\End}{\operatorname{End}}
\newcommand{\Fbar}{{\bar{F}}}
\newcommand{\Gal}{\operatorname{Gal}}
\newcommand{\GL}{\operatorname{GL}}
\newcommand{\Index}{\operatorname{Index}}
\newcommand{\Image}{\operatorname{Image}}
\newcommand{\liftable}{{\textup{liftable}}}
\newcommand{\hhat}{{\hat h}}
\newcommand{\Ker}{{\operatorname{ker}}}
\newcommand{\Lift}{\operatorname{Lift}}
\newcommand{\MOD}[1]{~(\textup{mod}~#1)}
\newcommand{\Mor}{\operatorname{Mor}}
\newcommand{\Norm}{{\operatorname{\mathsf{N}}}}
\newcommand{\notdivide}{\nmid}
\newcommand{\normalsubgroup}{\triangleleft}
\newcommand{\NotDom}{\operatorname{NotDom}}
\newcommand{\NS}{\operatorname{NS}}
\newcommand{\odd}{{\operatorname{odd}}}
\newcommand{\onto}{\twoheadrightarrow}
\newcommand{\ord}{\operatorname{ord}}
\newcommand{\Per}{\operatorname{Per}}
\newcommand{\PrePer}{\operatorname{PrePer}}
\newcommand{\PGL}{\operatorname{PGL}}
\newcommand{\Pic}{\operatorname{Pic}}
\newcommand{\Prob}{\operatorname{Prob}}
\newcommand{\Qbar}{{\bar{\QQ}}}
\newcommand{\rank}{\operatorname{rank}}
\newcommand{\Rat}{\operatorname{Rat}}
\newcommand{\rel}{{\textup{rel}}}  %% relative
\newcommand{\Resultant}{\operatorname{Res}}
\renewcommand{\setminus}{\smallsetminus}
\newcommand{\shat}{{\hat s}}
\newcommand{\sing}{{\textup{sing}}} %% singular locus
\newcommand{\Span}{\operatorname{Span}}
\newcommand{\Spec}{\operatorname{Spec}}
\newcommand{\Support}{\operatorname{Supp}}
\newcommand{\That}{{\hat T}}  %% blow-up of T to make P a morphism. 
                              %% maybe should call it V_P to indicate
                              %% dependence on P.
\newcommand{\tors}{{\textup{tors}}}
\newcommand{\Trace}{\operatorname{Trace}}
\newcommand{\tr}{{\textup{tr}}} % for K/k trace
\newcommand{\UHP}{{\mathfrak{h}}}    % Upper half plane
\newcommand{\<}{\langle}
\renewcommand{\>}{\rangle}

\newcommand{\ds}{\displaystyle}
\newcommand{\longhookrightarrow}{\lhook\joinrel\longrightarrow}
\newcommand{\longonto}{\relbar\joinrel\twoheadrightarrow}

%% autonumbered constants for use in inequalities
\newcount\ccount \ccount=0
\def\cc{\global\advance\ccount by1{c_{\the\ccount}}}

%%%%%%%%%%%%%%%%%%%%%%%%%%%%%%%%%%%%%%%%%%%%%%%%%%%%%%%%%%%%%%%%%%%%%%

\begin{abstract}
Let $\f:W\dashrightarrow V$ be a dominant rational map between 
quasi-projective varieties of the same dimension. We give two proofs that
$h_V\bigl(\f(P)\bigr)\gg h_W(P)$ for all points~$P$ in a nonempty
Zariski open subset of~$W$.  For dominant rational
maps~$\f:\PP^n\dashrightarrow\PP^n$, we give a uniform estimate in which the
implied constant depends only on~$n$ and the degree of~$\f$.  As an
application, we prove a specialization theorem for equidimensional
dominant rational maps to semiabelian varieties, providing a
complement to Habegger's recent theorem on unlikely intersections.
\end{abstract}

%% Non-TeX abstract (for ArXiv)
%% Let F : W --> V be a dominant rational map between quasi-projective
%% varieties of the same dimension. We give two proofs that h_V(F(P)) >>
%% h_W(P) for all points P in a nonempty Zariski open subset of W.  For
%% dominant rational maps F : P^n --> P^n, we give a uniform estimate in
%% which the implied constant depends only on n and the degree of F.  As
%% an application, we prove a specialization theorem for equidimensional
%% dominant rational maps to semiabelian varieties, providing a
%% complement to Habegger's recent theorem on unlikely intersections.

\maketitle

%%%%%%%%%%%%%%%%%%%%%%%%%%%%%%%%%%%%%%%%%%%%%%%%%%%%%%%%%%%%%%%%%%%%%%
\section*{Introduction}

A fundamental property of Weil heights~\cite[B.3.2(b)]{MR1745599} is
functoriality for morphisms $\f:W\to V$ of (normal) projective
varieties:
\begin{equation}
  \label{eqn:hWV1}
  h_{V,D}\bigl(\f(P)\bigr) = h_{W,\f^*D}(P)+O(1).
\end{equation}
Functoriality breaks down quite badly for rational maps, as shown
by simple examples such as
\begin{equation}
  \label{eqn:fP2P2X2Y2XZ}
  \f:\PP^2\dashrightarrow\PP^2,\quad
  \f\bigl([X,Y,Z]\bigr) = [X^2,Y^2,XZ],
\end{equation}
which is a map of degree two having fixed points~$[a,a,b]$.
\par
When~$D$ is ample, a simple triangle inequality argument shows
that even for rational maps, there is an upper bound
\[
  h_{V,D}\bigl(\f(P)\bigr) \le h_{W,\f^*D}(P)+C.
\]

Our first result is a lower bound which, although not as
strong as~\eqref{eqn:hWV1}, is sufficiently nontrivial to have
interesting applications.

\begin{theorem}
\label{thm:htlwrbd}
Let $\f:W\dashrightarrow V$ be a dominant rational map between 
quasi-projective varieties, all defined over~$\Qbar$. Assume further that
$\dim(V)=\dim(W)$. Fix height functions~$h_V$ and~$h_W$ on~$V$
and~$W$, respectively, corresponding to ample divisors.  Then there
are constants~$C_1>0$ and~$C_2$ and a nonempty Zariski open set
$U\subset W$ such that
\begin{equation}
  \label{eqn:hWV2}
  h_{V}\bigl(\f(P)\bigr) \ge C_1h_{W}(P)-C_2
  \quad\text{for all $P\in U(\Qbar)$.}
\end{equation}
The constants~$C_1$ and~$C_2$ depend on~$V$,~$W$~$\f$, and the choice
of~$h_V$ and~$h_W$, but are independent of the point~$P$.
\end{theorem}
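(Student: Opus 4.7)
The plan is to resolve the indeterminacy of $\f$ so as to reduce to the case of a genuine morphism, pass to projective completions, and then exploit the fact that the pullback of an ample divisor under a generically finite surjective morphism is big. Concretely, I would first choose projective completions $\bar W \supset W$ and $\bar V \supset V$ together with ample divisors $D_W$ on $\bar W$ and $D_V$ on $\bar V$ whose associated heights agree with $h_W$ and $h_V$ up to $O(1)$; this is built into the very definition of a height on a quasi-projective variety. Letting $W'$ be the closure of the graph of $\f$ inside $\bar W \times \bar V$, the first projection $\pi:W'\to\bar W$ is a birational morphism and the second projection $\psi:W'\to\bar V$ is a morphism satisfying $\psi = \f\circ\pi$ wherever $\pi$ is an isomorphism.

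The crucial geometric observation is that, because $\f$ is dominant and $\dim W = \dim V$, the morphism $\psi$ is generically finite and surjective. Hence $\psi^* D_V$ is a nef divisor on $W'$ whose top self-intersection
\[
  (\psi^* D_V)^{\dim W'} \;=\; \deg(\psi)\cdot(D_V^{\dim \bar V})
\]
is strictly positive, so $\psi^* D_V$ is big. Kodaira's lemma, applied to the big divisor $\psi^* D_V$ and the divisor $\pi^* D_W$ on the projective variety $W'$, then produces an integer $m\ge 1$ and an effective divisor $E$ on $W'$ with $m\,\psi^* D_V \sim \pi^* D_W + E$.

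Translating this linear equivalence into heights via functoriality under the morphisms $\psi$ and $\pi$ gives
\[
  m\,h_V\bigl(\psi(Q)\bigr) \;=\; h_{W',\pi^* D_W}(Q) + h_{W',E}(Q) + O(1)
\]
for every $Q\in W'(\Qbar)$. Let $U\subset W$ be the open subset on which $\pi^{-1}$ is defined as a morphism and $\f$ itself is a morphism, and shrink $U$ further so that $\pi^{-1}(U)$ is disjoint from $\Support(E)$. For $P\in U(\Qbar)$, setting $Q = \pi^{-1}(P)$ we have $\psi(Q)=\f(P)$, $h_{W',\pi^* D_W}(Q)=h_W(P)+O(1)$ (birational invariance of heights for morphisms), and $h_{W',E}(Q)\ge -O(1)$ (lower bound for heights of effective divisors off their support). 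Combining these gives $m\,h_V(\f(P))\ge h_W(P)-O(1)$, i.e.\ \eqref{eqn:hWV2} with $C_1=1/m$.

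The main obstacle---and the step where the equidimensionality hypothesis is indispensable---is the bigness of $\psi^* D_V$. Without $\dim W = \dim V$ one would only obtain $\kappa(\psi^*D_V)=\dim \bar V<\dim W'$, and Kodaira's lemma would no longer yield an effective comparison with $\pi^* D_W$; everything after that point is bookkeeping around the indeterminacy locus and the choice of compactification.
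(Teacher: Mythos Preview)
Your argument is correct and is essentially the paper's first (algebro-geometric) proof: resolve the indeterminacy to get morphisms $\pi,\psi$ from a projective model to $\bar W,\bar V$, show that a suitable combination of the pulled-back ample divisors is big and hence has an effective multiple, and convert effectivity into a height inequality off the base locus via functoriality. The only difference is in how the bigness step is carried out---you observe directly that $\psi^*D_V$ is nef with positive top self-intersection (this is exactly where generic finiteness of $\psi$, hence equidimensionality, enters) and then invoke Kodaira's lemma, whereas the paper appeals to Siu's numerical criterion; your route is marginally more elementary and makes the role of the hypothesis $\dim W=\dim V$ a bit more visible. One small technical remark: the textbook form of Kodaira's lemma subtracts an \emph{ample} or \emph{effective} divisor, and $\pi^*D_W$ is only big and nef on $W'$; this is harmless since a multiple of $D_W$ is effective, but it is worth saying explicitly. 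Finally, note that the paper also gives a second, purely height-theoretic proof (Section~\ref{section:htproof}) based on comparing the height of an algebraic number with the height of the coefficients of its minimal polynomial over $\f^*\Qbar(V)$; that argument is genuinely different from yours.
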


We will give two proofs of Theorem~\ref{thm:htlwrbd}, the first a short proof
that relies on a numerical criterion involving nef and
big line bundles, the second a direct proof using only elementary
properties of height functions. 

Theorem~\ref{thm:htlwrbd} says that there is a nonempty open set on
which the ratio $h_V\bigl(\f(P)\bigr)\big/h_W(P)$ is bounded below by
a positive constant.  This prompts the following definition.

\begin{definition}
Let $\f:W\dashrightarrow V$ be a rational map between 
quasi-projective varieties, all defined over~$\Qbar$. 
Fix height functions~$h_V$ and~$h_W$ on~$V$ and~$W$, respectively,
corresponding to ample divisors. The \emph{height expansion
coefficient of~$\f$} (relative to the chosen height functions~$h_V$
and~$h_W$) is the quantity
\[
  \mu(\f) = \sup_{\emptyset\ne U\subset W}
  \liminf_{\substack{P\in U(\Qbar)\\ h_W(P)\to\infty\\}}
  \frac{h_V\bigl(\f(P)\bigr)}{h_W(P)},
\]
where the sup is over all nonempty Zariski open subsets of~$W$. (Note that as
we make~$U$ smaller, the liminf becomes larger, so we may restrict attention
to sets~$U$ such that~$\f$ is defined at every point of~$U$.)
\end{definition}

Theorem~\ref{thm:htlwrbd} is equivalent to the assertion that if~$\f$
is equidimensional and dominant, then~$\mu(\f)>0$.  Our second result
gives a uniform bound for dominant self-maps of projective space.

\begin{theorem}
\label{thm:Pnunifbd}
Let $n\ge1$ and $d\ge1$ be integers. There are constants
$C_i=C_i(d,n)$  with $C_1>0$ such that for all dominant
rational maps \text{$\f:\PP^n\dashrightarrow\PP^n$} defined over~$\Qbar$
there is a nonempty Zariski open set \text{$U_\f\subset\PP^n$} such that
\[
  h\bigl(\f(P)\bigr) \ge C_1h(P)-C_2h(\f)-C_3
  \qquad\text{for all $P\in U_\f(\Qbar)$.}
\]
\textup(N.B. The constants~$C_1$,~$C_2$, and~$C_3$ depend only on~$d$
and~$n$, and are independent of the map~$\f$ and the point~$P$. See
Section~$\ref{section:mapsonPn}$ for the exact definition of
the height~$h(\f)$ of a rational map~$\f$.\textup)
\end{theorem}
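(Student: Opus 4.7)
The plan is to deduce Theorem~\ref{thm:Pnunifbd} from Theorem~\ref{thm:htlwrbd} by applying the latter to a single ``universal'' evaluation map over the parameter space of all dominant degree-$d$ rational self-maps of $\PP^n$. Fix $d,n\ge1$ and set $N=(n+1)\binom{n+d}{d}-1$, so that every ordered $(n+1)$-tuple $(f_0,\ldots,f_n)$ of degree-$d$ forms on $\PP^n$ corresponds, modulo common scaling, to a point of $\PP^N$. Let $\Mcal\subset\PP^N$ be the Zariski open subset of tuples with $\gcd(f_0,\ldots,f_n)=1$ that are dominant as rational maps; it is irreducible and quasi-projective, and the height $h(\f)$ for $\f\in\Mcal$ is just the restriction of the standard Weil height on $\PP^N$.

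Now consider the universal evaluation rational map
\[
  \Phi:\Mcal\times\PP^n\dashrightarrow\Mcal\times\PP^n,\qquad
  (\f,P)\longmapsto\bigl(\f,\f(P)\bigr).
\]
Source and target are irreducible, quasi-projective, of dimension $N+n$, and $\Phi$ is dominant because for each $\f\in\Mcal$ its restriction to the fiber $\{\f\}\times\PP^n$ is the dominant rational map $P\mapsto\f(P)$. Equipping $\Mcal\times\PP^n$ with the ample height coming from $\Ocal_{\PP^N}(1)\boxtimes\Ocal_{\PP^n}(1)$, so that $h_{\Mcal\times\PP^n}(\f,P)=h(\f)+h(P)$, Theorem~\ref{thm:htlwrbd} furnishes constants $c_1>0$, $c_2$ and a nonempty Zariski open $U\subset\Mcal\times\PP^n$ with
\[
  h(\f)+h\bigl(\f(P)\bigr)\ge c_1\bigl(h(\f)+h(P)\bigr)-c_2
  \qquad\text{for all $(\f,P)\in U$,}
\]
which rearranges to $h(\f(P))\ge c_1 h(P)-(1-c_1)h(\f)-c_2$: precisely the shape predicted.

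It remains to convert this generic statement about pairs $(\f,P)$ into one valid for every dominant $\f$. The projection $\pi:\Mcal\times\PP^n\to\Mcal$ is open, so $\pi(U)$ is open and dense in the irreducible $\Mcal$; for every $\f\in\pi(U)$ the slice $U\cap(\{\f\}\times\PP^n)$ is the desired nonempty open $U_\f\subset\PP^n$. To handle the proper closed complement $B=\Mcal\smallsetminus\pi(U)$, decompose it into irreducible components $B_\alpha$, and observe that the restriction $\Phi|_{B_\alpha\times\PP^n}:B_\alpha\times\PP^n\dashrightarrow B_\alpha\times\PP^n$ is again equidimensional and dominant, since every point of $B_\alpha\subset\Mcal$ still corresponds to a dominant map $\f$. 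Theorem~\ref{thm:htlwrbd} applies once more to each $B_\alpha$; iterating and appealing to Noetherian induction on the strictly decreasing dimension of successive bad sets terminates the procedure after finitely many rounds and yields a finite stratification of $\Mcal$ on each piece of which the bound holds with its own triple of constants $(c_1^{(k)},c_2^{(k)})$. Taking $C_1=\min_k c_1^{(k)}>0$, $C_2=\max_k(1-c_1^{(k)})$, $C_3=\max_k c_2^{(k)}$ gives the required constants.

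The central obstacle is precisely this uniformity in $\f$. Theorem~\ref{thm:htlwrbd} is a non-constructive existence result supplying no canonical choice of $U$ or of the constants, so only the Noetherian-induction step converts ``bound holds on a dense open subset of parameter space'' into ``bound holds for every $\f$.'' Two points must be checked at each inductive step: that the restricted universal map $\Phi|_{B_\alpha\times\PP^n}$ stays dominant and equidimensional (automatic, since every stratum consists of dominant maps) and that every variety appearing in the procedure is intrinsic to $(d,n)$. The latter is clear because the entire induction takes place inside the fixed variety $\Mcal\times\PP^n$, whose construction depends only on $d$ and $n$; hence only finitely many strata arise and the resulting constants are themselves functions of $d$ and $n$ alone.
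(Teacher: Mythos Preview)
Your proof is correct and follows essentially the same strategy as the paper: apply Theorem~\ref{thm:htlwrbd} to the universal evaluation map $\Phi$ over the parameter space $\Dom_d^n$ of dominant degree-$d$ self-maps of $\PP^n$, then run a Noetherian induction on closed subvarieties of the base to convert the resulting generic bound into one valid for every $\f$. The only organizational difference is that you recurse on the locus $B=\Mcal\setminus\pi(U)$ of parameters whose entire fiber misses $U$, whereas the paper decomposes the complement $\PP^n_S\setminus U_S$ into ``horizontal'' and ``vertical'' components and recurses on the projections of the vertical ones; both inductions terminate for the same dimension-drop reason and yield uniform constants depending only on $(d,n)$.
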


Theorem~\ref{thm:Pnunifbd} implies that for dominant
maps~$\f:\PP^n\dashrightarrow\PP^n$ of degree~$d$, the height
expansion coefficient~$\mu(\f)$ is bounded below by a constant that
depends only on~$n$ and~$d$. This prompts the definition
\[
  \overline\mu_d(\PP^n)
  \stackrel{\text{def}}{=}
  \inf_{\substack{\f:\PP^n\dashrightarrow\PP^n\\\text{dominant}\\\deg\f=d\\}}
       \mu(\f).
\]
We note that Theorem~\ref{thm:Pnunifbd} implies that
$\overline\mu_d(\PP^n)>0$.
\par
It would be interesting to know the exact value of
$\overline\mu_d(\PP^n)$. It is clear that $\overline\mu_d(\PP^1)=d$,
since every rational self-map of~$\PP^1$ is a morphism.  In
Section~\ref{section:examples} we give examples of maps on~$\PP^n$ for
which we can compute, or estimate, the value of~$\mu(\f)$.
In particular, we prove that
\[
  \overline\mu_d(\PP^n)\le d^{-(n-1)}
  \qquad\text{for all $n\ge2$ and $d\ge2$.}
\]
We also show that certain automorphisms $\f:V\to V$ of~K3 surfaces
satisfy $\mu(\f^n)\le(2+\sqrt3\,)^{-n}$, so even for automorphisms of
varieties, the height expansion coefficient can be arbitrarily small.
\par
For further properties of~$\mu(\f)$, and for a lower bound
for~$\mu(\f)$ that is more closely tied to the geometry of the
map~$\f$, see~\cite{leethesis,leepaper2}.

\begin{acknowledgement}
The author would like to thank Marc Hindry and David Masser for their
assistance, David Cox for suggesting a method of proving
Proposition~\ref{prop:domdn}, and Chong Gyu Lee for a suggestion
regarding Proposition~\ref{prop:Pnhtratsmall}.
\end{acknowledgement}

%%%%%%%%%%%%%%%%%%%%%%%%%%%%%%%%%%%%%%%%%%%%%%%%%%%%%%%%%%%%%%%%%%%%%%
\section{An algebro-geometric proof of Theorem~\ref{thm:htlwrbd}}

We use the following numerical criterion for bigness due to Siu.

\begin{theorem}
\label{thm:siu}
\textup{(Siu~\cite{Siu93}, \cite[Theorem~2.2.15]{Laz04})}
Let $V$ be a projective variety of dimension~$n$, and let~$D$ and~$E$
be nef divisors on~$V$. Assume that
\[
  (D^n) > n(D^{n-1}\cdot E).
\]
Then $D-E$ is big.
\end{theorem}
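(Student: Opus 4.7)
The plan is to establish that $D-E$ is big by producing enough global sections: showing $h^0\bigl(V,\mathcal{O}_V(m(D-E))\bigr) \gg m^n$ for $m$ large. The strategy is to compute $h^0(V,mD)$ (up to lower-order terms) by asymptotic Riemann-Roch, and then to bound from above the ``deficit'' $h^0(V,mD)-h^0\bigl(V,m(D-E)\bigr)$ contributed by subtracting $mE$. The numerical hypothesis $(D^n)>n(D^{n-1}\cdot E)$ should be exactly what ensures the main term survives.

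First I would invoke the asymptotic Riemann--Roch formula for nef divisors, a standard consequence of Fujita's vanishing theorem: on an $n$-dimensional projective variety, a nef divisor $L$ satisfies $h^0(V,mL)=(L^n)\,m^n/n!+O(m^{n-1})$. Applied to $D$, this supplies the main term $(D^n)\,m^n/n!$.

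The main step, and the primary obstacle, is the inequality
\[
  h^0(V,mD)-h^0\bigl(V,m(D-E)\bigr)\;\le\;n(D^{n-1}\cdot E)\cdot\frac{m^n}{n!}+O(m^{n-1}).
\]
After reducing to the case that $E$ is effective --- e.g.\ by replacing $E$ with $E+\epsilon A$ for an ample $A$ and rational $\epsilon>0$, applying a Kodaira-type perturbation to keep things numerical, and letting $\epsilon\to 0$ --- I would filter
\[
  \mathcal{O}_V(mD)\supset\mathcal{O}_V(mD-E)\supset\cdots\supset\mathcal{O}_V\bigl(m(D-E)\bigr)
\]
and sum the cohomological jumps. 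For each $k=0,1,\dots,m-1$ the short exact sequence
\[
  0\to\mathcal{O}_V\bigl(mD-(k+1)E\bigr)\to\mathcal{O}_V(mD-kE)\to\mathcal{O}_E\bigl((mD-kE)|_E\bigr)\to 0
\]
contributes at most $h^0\bigl(E,(mD-kE)|_E\bigr)$ to the telescoping difference. Bounding each term by asymptotic Riemann-Roch on the $(n-1)$-dimensional $E$, or by induction on $\dim V$, and summing over $k$, the total collapses to $n(D^{n-1}\cdot E)\,m^n/n!$ plus lower-order corrections --- the factor of $n$ emerging from a Riemann-sum calculation as $k/m$ sweeps from $0$ to $1$.

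Combining the two estimates gives
\[
  h^0\bigl(V,m(D-E)\bigr)\;\ge\;\bigl[(D^n)-n(D^{n-1}\cdot E)\bigr]\frac{m^n}{n!}+O(m^{n-1}),
\]
which is positive for $m$ large by hypothesis, so $D-E$ is big. The two delicate points to watch are the reduction to effective $E$ and the control of $h^0\bigl(E,(mD-kE)|_E\bigr)$ when $k$ is close to $m$, where the restricted class may fail to be nef; one addresses the latter by splitting off the nef part of the class and estimating the residual contribution via a comparison with nef and ample divisors on $E$.
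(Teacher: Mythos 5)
The paper does not prove this theorem; it is quoted as a known result with citations to Siu and to \cite[Theorem~2.2.15]{Laz04}, so there is no ``paper's proof'' to compare against. Your outline is essentially the proof that appears in Lazarsfeld: reduce to $D,E$ ample (hence, after scaling, very ample integral divisors), telescope $h^0$ along the filtration $\mathcal{O}_V(mD)\supset\mathcal{O}_V(mD-E)\supset\cdots\supset\mathcal{O}_V(m(D-E))$ by restricting to a general smooth $E'\in|E|$, and compare leading coefficients via asymptotic Riemann--Roch for nef divisors. One point where your sketch is more complicated than it needs to be is the bound on $\sum_{k=0}^{m-1}h^0\bigl(E',(mD-kE)|_{E'}\bigr)$. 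You describe a Riemann-sum estimate and flag as delicate the case where $k$ is close to $m$ and $(mD-kE)|_{E'}$ fails to be nef; neither issue arises in the standard argument. Because $E$ is very ample, the restriction $E|_{E'}$ is linearly equivalent to an effective divisor on $E'$, so multiplication by a section of $kE|_{E'}$ gives an injection showing $h^0\bigl(E',(mD-kE)|_{E'}\bigr)\le h^0\bigl(E',mD|_{E'}\bigr)$ uniformly for all $k\ge0$. Summing $m$ copies of $h^0(E',mD|_{E'})=(D^{n-1}\cdot E)\,m^{n-1}/(n-1)!+O(m^{n-2})$ then yields the upper bound $n(D^{n-1}\cdot E)\,m^n/n!+O(m^{n-1})$, the factor $n$ coming simply from $1/(n-1)!=n/n!$. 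With that simplification your argument is correct and is the standard proof.
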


We recall that ample divisors are nef, that the nef property is
preserved under pull-back by morphisms, and that a divisor is big if
some multiple defines a rational embedding into projective space.
See~\cite[\S\S1.4,2.2]{Laz04} for basic definitions and 
properties of nef and big divisors.

\begin{proof}[Proof of Theorem~$\ref{thm:htlwrbd}$]
Without loss of generality, we may replace $V$ and $W$ by normal
projective varieties, since the statement of the theorem applies
only to points in some Zariski open subset of~$W$.
\par
The map $\f:W\dashrightarrow V$ is only assumed to be rational, so we
resolve the indeterminacy by finding a projective variety~$X$ and
morphisms $\psi:X\to V$ and $\pi:X\to W$ such that~$\pi$ is a
birational morphism and the following diagram
commutes~\cite[II.7.17.3]{MR0463157}:
%%%%%%%%%%%%%%%%%%%%%%%%%%%%%%%%%%%%%%%%%%%%%%%%%%%%%%%%%%%%%%%%%%%%%%
\begin{center}
\begin{picture}(100,95)(0,-10)
\put(0,0){\makebox(0,0)[c]{$W$}}
\multiput(10,0)(10,0){6}{\line(1,0){5}\hspace{5pt}}
\put(70,0){\vector(1,0){10}}
\put(90,0){\makebox(0,0)[c]{$V$}}
\put(0,60){\vector(0,-1){50}}
\put(0,70){\makebox(0,0)[c]{$X$}}
\put(10,60){\vector(4,-3){70}}
\put(7,32){\makebox(0,0)[t]{$\scriptstyle\pi$}}
\put(42,7){\makebox(0,0)[c]{$\scriptstyle\f$}}
\put(48,43){\makebox(0,0)[c]{$\scriptstyle\psi$}}
\end{picture}
\end{center}
%%%%%%%%%%%%%%%%%%%%%%%%%%%%%%%%%%%%%%%%%%%%%%%%%%%%%%%%%%%%%%%%%%%%%%
Let $n=\dim(V)=\dim(W)$. We let~$D\in\Div(W)$ and~$E\in\Div(V)$ be
ample divisors associated to the Weil height functions~$h_W=h_{W,D}$
and~$h_V=h_{V,E}$. The fact that~$D$ is ample implies that its
self-inter\-sec\-tion $(D^n)$ is positive, and then the projection
formula tells us that
\[
 (\pi^*D)^n = (D^n) \ge 1.
\]
Hence we can find an integer~$m\ge1$ satisfying
\[
  m\bigl((\pi^*D)^n\bigr) > n\bigl((\pi^*D)^{n-1}\cdot\psi^*E\bigr).
\]
Multiplying by $m^{n-1}$ yields
\[
  (m\pi^*D)^n > n\bigl((m\pi^*D)^{n-1}\cdot\psi^*E\bigr).
\]
This allows us to apply Siu's theorem (Theorem~\ref{thm:siu}) to the
divisors $m\pi^*D$ and $\psi^*E$ to conclude that $m\pi^*D-\psi^*E$ is
big.  (We are using the facts that ample divisors are nef and that the
pull-back of a nef divisor by a morphism is nef.)  In particular,
there is an integer~$k\ge1$ such that $km\pi^*D-k\psi^*E$ is
effective.  It follows from a standard property of height
functions~\cite[B.3.2(e)]{MR1745599} that there is a nonempty Zariski
open set $U\subset X$ such that
\begin{equation}
  \label{eqn:hXkmDkE1}
  h_{X,km\pi^*D-k\psi^*E}(x) \ge O(1)\quad\text{for all $x\in U(\Qbar)$.}
\end{equation}
(More precisely, we may take~$U$ to be the complement of the base locus
of the divisor~$km\pi^*D-k\psi^*E$.) Functorial properties
of height functions~\cite[B.3.2(b,c)]{MR1745599} tell us that
\begin{align}
  \label{eqn:hXkmDkE2}
  h_{X,km\pi^*D-k\psi^*E}(x)
  &= km h_{X,\pi^*D}(x) - kh_{X,\psi^*E}(x) + O(1) \notag\\
  &= km h_{W,D}\bigl(\pi(x)\bigr) - kh_{V,E}\bigl(\psi(x)\bigr) + O(1).
\end{align}
Combining~\eqref{eqn:hXkmDkE1} and~\eqref{eqn:hXkmDkE2} yields
\[
  h_{W,D}\bigl(\pi(x)\bigr) \ge \frac{1}{m}h_{V,E}\bigl(\psi(x)\bigr) + O(1)
  \quad\text{for all $x\in U(\Qbar)$.}
\]
Using the facts that $\pi$ is surjective, that $\psi=\f\circ\pi$, and
that~$\f$ is defined on an open subset of~$W$, we conclude that there
is an open subset $U'\subset W$ such that
\[
  h_{W,D}(P) \ge \frac{1}{m}h_{V,E}\bigl(\f(P)\bigr) + O(1)
  \quad\text{for all $P\in U'(\Qbar)$.}
\]
This concludes the proof of Theorem~\ref{thm:htlwrbd}.
\end{proof}

%%%%%%%%%%%%%%%%%%%%%%%%%%%%%%%%%%%%%%%%%%%%%%%%%%%%%%%%%%%%%%%%%%%%%%
\section{An elementary height-based proof of Theorem~\ref{thm:htlwrbd}}
\label{section:htproof}

In this section we use basic properties of height functions to give an
alternative proof of Theorem~\ref{thm:htlwrbd}. The key estimate is
the standard inequality relating the height of the roots of a
polynomial to the height of its coefficients. We begin with an
elementary result that will be needed for the proof.

\begin{lemma}
\label{lemma:hWlechV}
Let
\[
  \f : W \dashrightarrow V
\]
be a rational map of projective varieties defined over~$\Qbar$, and
let~$Z_\f$ be the indeterminacy locus of~$\f$, so~$\f$ is well-defined
on~$V\setminus Z_\f$.  Fix height functions~$h_V$ and~$h_W$
corresponding to ample divisors on~$V$ and~$W$, respectively.  Then
there are constants~$C_1>0$ and~$C_2$, such that
\[
  h_V\bigl(\f(P)\bigr) \le C_1 h_W(P) + C_2
  \qquad\text{for all $P\in W(\Kbar)\setminus Z_\f$.}
\]
\end{lemma}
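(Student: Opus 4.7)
The plan is to reduce the statement to the standard projective-space inequality that the coordinates of a polynomial map of degree $d$ have height at most $d$ times the height of the input, up to $O(1)$. First I would pick integers $n_W,n_V\ge1$ such that the multiples $n_W D_W$ and $n_V D_V$ of the chosen ample divisors are very ample, giving closed embeddings $i_W:W\hookrightarrow\PP^N$ and $i_V:V\hookrightarrow\PP^M$. By functoriality of heights~\cite[B.3.2(b,c)]{MR1745599}, we have
\[
  h_W = \tfrac{1}{n_W}\,h_{\PP^N}\!\circ i_W + O(1),\qquad
  h_V = \tfrac{1}{n_V}\,h_{\PP^M}\!\circ i_V + O(1),
\]
so it suffices to prove the desired inequality with $h_W$ and $h_V$ replaced by the pullbacks of the standard projective heights.

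Next I would use the embeddings to realize the composition $i_V\circ\f:W\dashrightarrow\PP^M$ as a rational map given, on the complement of $Z_\f$, by $M+1$ homogeneous polynomials $\f_0,\ldots,\f_M\in\Qbar[x_0,\ldots,x_N]$ of some common degree $d$, not simultaneously vanishing on $i_W(W)$. For any point $P\in W(\Kbar)\setminus Z_\f$, choosing a projective representative of $i_W(P)$ and evaluating, the standard triangle inequality for heights of polynomial maps on $\PP^N$ gives
\[
  h_{\PP^M}\bigl(i_V(\f(P))\bigr) \le d\,h_{\PP^N}\bigl(i_W(P)\bigr) + c(\bfphi),
\]
where $c(\bfphi)$ depends only on the number of monomials appearing in the $\f_i$ and on the heights of their coefficients, and in particular is independent of $P$.

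Combining the last displayed inequality with the two height comparisons above yields
\[
  h_V\bigl(\f(P)\bigr) \le \frac{d\,n_W}{n_V}\,h_W(P) + C_2
\]
for all $P\in W(\Kbar)\setminus Z_\f$, which is the assertion of the lemma with $C_1=dn_W/n_V>0$ and $C_2$ absorbing the $O(1)$ terms and $c(\bfphi)/n_V$. The proof is essentially bookkeeping; the only point that requires any care is verifying that the standard polynomial-map height inequality still applies to rational maps once one restricts to the complement of the indeterminacy locus, but this is immediate from the definition, since on $W\setminus Z_\f$ at least one $\f_i$ is nonzero at $P$, so no cancellation problem arises and the usual archimedean and non-archimedean triangle estimates apply coordinatewise. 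Thus the main (minor) obstacle is simply tracking constants, and no deeper input is needed beyond the definition of Weil heights.
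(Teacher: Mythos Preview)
Your argument is essentially the paper's own: pass to very ample multiples, embed $W$ and $V$ into projective spaces, and then invoke the elementary triangle-inequality bound \cite[B.2.5(a)]{MR1745599} for polynomial maps between projective spaces.

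The one imprecise step is the assertion that a \emph{single} tuple $(\f_0,\ldots,\f_M)$ of homogeneous forms on $\PP^N$ represents $i_V\circ\f$ on all of $W\setminus Z_\f$ (equivalently, that at every $P\in W\setminus Z_\f$ some $\f_i(P)\ne0$). In general the common vanishing locus on $i_W(W)$ of one such tuple can be strictly larger than $Z_\f$; for example, the first projection $\PP^1\times\PP^1\to\PP^1$ is a morphism ($Z_\f=\emptyset$), yet on the Segre quadric in $\PP^3$ no single tuple of ambient forms represents it without base points---one needs both $[x,w]$ and $[z,y]$. The paper flags exactly this issue, noting that finitely many polynomial representations are needed to cover $W\setminus Z_\f$; taking the maximum of the resulting constants then gives the lemma. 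With that small adjustment your proof is correct.
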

\begin{proof}
This is a standard triangle inequality estimate, but lacking a
suitable reference, we sketch the proof. Replacing~$h_V$ and~$h_W$ by
multiples, we may assume that they correspond to embeddings
$V\subset\PP^n$ and $W\subset\PP^m$.  Extending~$\f$ to a rational
map~$\f:\PP^n\dashrightarrow\PP^m$, this reduces the lemma to the case
that~$V$ and~$W$ are projective spaces, in which
case~\cite[B.2.5(a)]{MR1745599} completes the proof. (More precisely,
we take a finite number of extensions of~$\f$ in order to
cover all of~$V\setminus Z_\f$.)
\end{proof}

\begin{proof}[Proof of  Theorem~$\ref{thm:htlwrbd}$]
The assumptions that~$\dim(V)=\dim(W)$ and that~$\f$ is a dominant
rational map imply that~$\Qbar(W)$ is a finite algebraic extension
of~$\f^*\Qbar(V)$. 
Let~$f\in\Qbar(W)$ be a rational function on~$W$.
Then~$f$ is a root of a polynomial
\[
  X^d + A_1 X^{d-1} + \cdots + A_{d-1} X + A_d
  \qquad\text{with $A_1,\ldots,A_d\in\f^*\Qbar(V)$.}
\]
Let~$U=U_f$ be a nonempty open subset of~$W$ such that~$f$ and all of the 
functions~$A_1,\ldots,A_d$ are defined at every~$P\in U(\Qbar)$.
Then for every~$P\in U(\Qbar)$, the number~$f(P)\in\Qbar$ is a root
of the polynomial
\[
  X^d + A_1(P) X^{d-1} + \cdots + A_{d-1}(P) X + A_d(P).
\]
\par
A standard estimate \cite[VIII.5.9]{MR1329092} relating the heights of the
roots and the coefficients of a polynomial gives
\begin{align}
  \label{eqn:hyt}
  h\bigl(f(P)\bigr) - d\log2 \le h\bigl([1,A_1(P),A_2(P),&\dots,A_d(P)]\bigr)
     \notag\\
  &\text{for all $P\in U(\Qbar)$.}
\end{align}
\par
Since~$A_1,\ldots,A_d\in\f^*\Qbar(V)$, there are
functions~$B_i\in\Qbar(V)$ such that~$A_i=\f^*B_i=B_i\circ\f$.  We
define rational maps
\[
  \a = [1,A_1,\ldots,A_d] : W \dashrightarrow \PP^d,
  \qquad
  \b = [1,B_1,\ldots,B_d] : V \dashrightarrow \PP^d.
\]
With this notation, the estimate~\eqref{eqn:hyt} becomes
\begin{equation}
  \label{eqn:hfPd2haP}
  h\bigl(f(P)\bigr)-d\log2 \le h\bigl(\a(P)\big)
  \quad\text{for all $P\in U(\Qbar)$.}
\end{equation}
\par
Applying the elementary triangle inequality estimate described in
Lemma~\ref{lemma:hWlechV} to the rational map~$\b$ gives
\begin{equation}
  \label{eqn:hbxW}
  h\bigl(\b(Q)\bigr) \le \cc h_V(Q) + \cc
\end{equation}
for all~$Q\in V(\Qbar)$ at which~$B_1,\ldots,B_d$ are defined.  (Here
and in what follows,
the constants~$c_i=c_i(W,V,\f,h_W,h_V,f)>0$ 
are independent of~$P\in U(\Qbar)$.) Applying~\eqref{eqn:hbxW}
with~$Q=\f(P)$ for~$P\in U(\Qbar)$ and combining it
with~\eqref{eqn:hfPd2haP}  yields
\begin{equation}
  \begin{aligned}
    h\bigl(f(P)\bigr)
    &\le \cc h\bigl(\a(P)\bigr) + \cc
      &&\text{from~\eqref{eqn:hfPd2haP},} \\
    &= \cc h\bigl(\b\circ\f(P)\bigr) + \cc
      &&\text{since $\a=\b\circ\f$,} \\
    &\le \cc h_V\bigl(\f(P)\bigr) + \cc
      &&\text{from~\eqref{eqn:hbxW} with $Q=\f(P)$.}
  \end{aligned}
  \label{eqn:hftlehwft}
\end{equation}
\par
The height~$h_W$ is relative to an ample divisor, so taking a multiple
of~$h_W$, we may assume that it is associated to a projective
embedding~$\psi:W\hookrightarrow\PP^n$,
i.e.,~$h_W(P)=h\bigl(\psi(P)\bigr)$. The map~$\psi$ is given by
rational functions, say
\begin{equation}
  \label{eqn:defpsi}
  \psi = [1,f_1,\ldots,f_n]
  \quad\text{with $f_1,\dots,f_n\in\Qbar(W)$}.
\end{equation}
Applying~\eqref{eqn:hftlehwft} to each of~$f_1,\ldots,f_n$, we find that
\[
  \begin{aligned}
    h_W(P)
    &= h\bigl(\psi(P)\bigr) 
      &&\text{from the choice of $\psi$,} \\
    &= h\bigl([1,f_1(P),\dots,f_n(P)]\bigr) 
      &&\text{from \eqref{eqn:defpsi},}\\
    &\le \sum_{i=1}^n h\bigl(f_i(P)\bigr) 
      &&\text{elementary height estimate,} \\
    &\le \cc h_V\bigl(\f(P)\bigr) + \cc
      &&\text{from~\eqref{eqn:hftlehwft}.}
  \end{aligned}
\]
This completes the proof of Theorem~\ref{thm:htlwrbd}.
\end{proof}

\begin{remark}
The open subset~$U$ in Theorem~\ref{thm:htlwrbd} is necessary.  To
see this, consider the map
\[
  \f:\PP^2\dashrightarrow\PP^2,\qquad
  \f=[X^2,Y^2,XZ].
\]
Then~$\f$ is a dominant rational map, but
\[
  \f\bigl([0,\a,\b]\bigr) = [0,1,0]
  \qquad\text{for all $\a\ne0$.}
\]
Thus
\[
  h\bigl(\f\bigl([0,\a,\b]\bigr)\bigr) = h\bigl([0,1,0]\bigr) = 0,
\]
which is certainly not larger than a multiple
of~$h\bigl([0,\a,\b]\bigr)$.  For this example, one can check that
\[
  h\bigl(\f(P)\bigr) \ge h(P)
  \qquad\text{for all $P\in U=\{X\ne0\}$.}
\]
\end{remark}

%%%%%%%%%%%%%%%%%%%%%%%%%%%%%%%%%%%%%%%%%%%%%%%%%%%%%%%%%%%%%%%%%%%%%%
\section{A uniform height estimate for rational self-maps of $\PP^n$}
\label{section:mapsonPn}

We use Theorem~\ref{thm:htlwrbd} to prove Theorem~\ref{thm:Pnunifbd},
which says that there is a uniform lower bound for heights on~$\PP^n$
relative to dominant rational self-maps.
Before starting the proof, we briefly describe the universal parameter
space of dominant rational degree~$d$ self-maps of~$\PP^n$.  To ease
notation, let
\[
  N = \binom{n+d}{n}
\]
be the number of monomials of degree~$d$ in~$n+1$ variables.
A rational map~$\f:\PP^n\to\PP^n$ of degree~$d$ has
the form $\f=[\f_0,\ldots,\f_n]$, where each~$\f_i$ is a homogeneous
polynomial of degree~$d$. Taking the coefficients of~$\f_0,\ldots,\f_n$
as coordinates of a point in projective space, the map~$\f$ corresponds
to a point~$A_\f\in\PP^{(n+1)N-1}$. We define subsets
of~$\PP^{(n+1)N-1}$ as follows:
\begin{align*}
  \Rat_d^n&=\{A_\f : \text{$\f$ has degree $d$} \}, \\
  \Dom_d^n&=\{A_\f : \text{$\f$ is dominant of degree $d$} \}, \\
  \Mor_d^n&=\{A_\f : \text{$\f$ is a morphism} \}.
\end{align*}
We note that
\[
  \Mor_d^n \subset \Dom_d^n \subset \Rat_d^n \subset \PP^{(n+1)N-1}.
\]
\par
We use these inclusions to define the height of a rational
map~$\f\in\Rat_d^n(\Qbar)$ to be the Weil height of the corresponding
point in projective space,
\[
  h(\f) = h_{\PP^{(n+1)N-1}}(A_\f).
\]
Similarly, for a point
\[
  P=(x,A_\f)\in\PP^n_{\Rat_d^n}
  = \PP^n\times\Rat_d^n\subset\PP^n\times\PP^{(n+1)N-1},
\]
we define 
\[
  h(P) = h_{\PP^n}(x)+h(\f)
\]
to be the Weil height relative to the line bundle
$\Ocal_{\PP^n\times\PP^{(n+1)N-1}}(1,1)$.
\par
The set~$\Mor_d^n$ is a nonempty Zariski open subset
of~$\PP^{(n+1)N-1}$; more precisely,~$\Mor_d^n$ is an affine variety,
since it is the complement of a single polynomial, the Macaulay
resultant~\cite{MR1142904}.  Similarly,~$\Rat_d^n$ is a nonempty
Zariski open subset of~$\PP^{(n+1)N-1}$, since~$A_\f\notin\Rat_d^n$ if
and only if there is a non-constant homogeneous polynomial~$\psi$
dividing all of~$\f_0,\ldots,\f_n$. Again, elimination theory says
that for each fixed degree of~$\psi$, the set of such~$A_\f$ is a
Zariski closed set.
\par
The fact that~$\Dom_d^n$ is quasi-projetive is perhaps less clear, and
although undoubtedly well known, for lack of a suitable reference we
sketch the proof.  

\begin{proposition}
\label{prop:domdn}
Over any field of characteristic zero, the parameter space~$\Dom_d^n$
of dominant degree~$d$ rational self-maps of\/~$\PP^n$ is a
\textup(nonempty\textup) Zariski open subset of~$\PP^{(n+1)N-1}$.
\end{proposition}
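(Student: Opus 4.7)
The plan is to cut $\Dom_d^n$ out of $\Rat_d^n$ using the Jacobian criterion for algebraic independence, which reduces the openness statement to a straightforward elimination-theoretic observation.

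First, I would translate dominance into an algebraic condition on the $\f_i$. For $A_\f \in \Rat_d^n$ with $\f=[\f_0,\ldots,\f_n]$, consider the polynomial cone map
\[
  \tilde\f : \AA^{n+1} \longrightarrow \AA^{n+1},
  \qquad (x_0,\ldots,x_n) \longmapsto \bigl(\f_0(x),\ldots,\f_n(x)\bigr).
\]
Since the $\f_i$ are all homogeneous of the common degree $d$, the image of $\tilde\f$ is a cone whose projectivization is the closure of the image of $\f$. Hence $\f$ is dominant iff $\tilde\f$ is dominant iff the ring homomorphism $K[y_0,\ldots,y_n] \to K[x_0,\ldots,x_n]$ defined by $y_i \mapsto \f_i$ is injective iff $\f_0,\ldots,\f_n$ are algebraically independent over the ground field $K$.

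Next, I invoke the classical Jacobian criterion for algebraic independence, valid in characteristic zero, which asserts that $\f_0,\ldots,\f_n \in K[x_0,\ldots,x_n]$ are algebraically independent over $K$ if and only if the Jacobian determinant $J(\f) := \det(\partial\f_i/\partial x_j)_{0\le i,j\le n}$ is not identically zero as a polynomial in $x$. Expanding $J(\f) = \sum_I c_I(A_\f)\,x^I$, each coefficient $c_I$ is a polynomial in the coordinates of $A_\f \in \PP^{(n+1)N-1}$; by multilinearity of the determinant in its rows, $c_I$ is homogeneous of degree $n+1$ in those coordinates. Therefore the locus $Z := \{A_\f : J(\f)\equiv 0\}$ is the common vanishing of a finite collection of homogeneous polynomials, and so is Zariski closed in $\PP^{(n+1)N-1}$. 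Combined with the already-established openness of $\Rat_d^n$, this exhibits $\Dom_d^n = \Rat_d^n \setminus Z$ as a Zariski open subset of $\PP^{(n+1)N-1}$. Non-emptiness is witnessed by $\f=[x_0^d,\ldots,x_n^d]$, whose Jacobian is the diagonal matrix with determinant $d^{n+1}(x_0\cdots x_n)^{d-1} \not\equiv 0$.

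The only substantive ingredient is the Jacobian criterion for algebraic independence, and this is precisely where the characteristic zero hypothesis enters: in positive characteristic $p$ the algebraically independent pair $x_1^p, x_2$ already has vanishing Jacobian. Given that criterion, the translation between projective dominance and the affine algebraic-independence condition via the cone construction is routine, and the passage from ``$J(\f)\not\equiv 0$ as a polynomial in $x$'' to a Zariski open condition on $A_\f$ is just the standard observation that the coefficients of $J(\f)$ are themselves homogeneous polynomials in the coefficients of $\f$.
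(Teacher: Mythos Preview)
Your proof is correct and follows essentially the same route as the paper: both arguments reduce the question to the characterization ``$\f$ is dominant iff the Jacobian determinant $J_\f=\det(\partial\f_i/\partial x_j)$ is not identically zero,'' and then observe that the coefficients of $J_\f$ as a polynomial in $x_0,\ldots,x_n$ are homogeneous polynomials in the coordinates of $A_\f$, so their common vanishing cuts out the non-dominant locus. The only cosmetic difference is that you justify the Jacobian criterion via the cone map and algebraic independence of $\f_0,\ldots,\f_n$, whereas the paper phrases the same fact geometrically in terms of the induced map on cotangent spaces and generic smoothness; these are two standard ways of saying the same thing in characteristic zero.
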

\begin{proof}
We write~$\f(\PP^n)$ for the image of~$\f$, i.e., $\f(\PP^n)$ is the
Zariski closure in~$\PP^n$ of~$\f(\PP^n\setminus Z_{\f})$,
where~$Z_{\f}$ is the locus of indeterminacy of~$\f$. Then
\[
  A_\f\in\Dom_d^n \quad\Longleftrightarrow\quad\f(\PP^n)=\PP^n.
\]
Equivalently, $A_\f\notin\Dom_d^n$ if and only if~$\f(\PP^n)$ is a
proper Zariski closed subset of~$\PP^n$.
\par
Let~$P\in\PP^N\setminus Z_{\f}$ be a point at which~$\f$ is defined,
and consider the map on the cotangent spaces,
\[
  \f_P^* : \Omega_{\f(\PP^n),\f(P)} \longrightarrow \Omega_{\PP^n,P}.
\]
Our characteristic zero assumption means that we do not have to
worry about inseparability, so we have:
\begin{itemize}
\setlength{\itemsep}{5pt plus5pt}
\item
  If $\dim\f(\PP^n)=n$, then $\f_P^*$ is injective for almost all $P$.
\item
  If $\dim\f(\PP^n)<n$, then $\f_P^*=0$  for all $P$.
\end{itemize}
Hence letting
\[
  J_\f = \det(\partial f_i/\partial x_j)_{0\le i,j\le n}
\]
be the Jacobian determinant, we obtain the characterization
\[
  A_\f \in \Rat_d^n\setminus\Dom_d^n
  \quad\Longleftrightarrow\quad
  J_\f = 0.
\]
The Jacobian~$J_\f$ consists of a certain number of monomials
in~$x_0,\ldots,x_n$ whose coefficients are polynomials in the 
coefficients of~$\f_0,\ldots,\f_n$. The ideal generated by those
coefficients is the ideal that defines the complement
$\Rat_d^n\setminus\Dom_d^n$, which completes the proof that~$\Dom_d^n$
is a quasi-projective subvariety of of~$\Rat_d^n$.
\end{proof}

\begin{proof}[Proof of Theorem~$\ref{thm:Pnunifbd}$]
The idea is to use the universal family over the parameter
space~$\Dom_d^n$ of dominant degree~$d$ rational self-maps of~$\PP^n$.
Directly from the definition of~$\Dom_d^n$, we have a rational map~$\F$
as in the following diagram
\par\noindent
\begin{picture}(200,100)(-120,-20)
  \put(-10,45){\vector(1,-1){40}}
  \put(110,45){\vector(-1,-1){40}}
  \multiput(10,60)(10,0){7}{\line(1,0){5}\hspace{5pt}}
  \put(80,60){\vector(1,0){10}}
  \put(50,0){\makebox(0,0)[ct]{$\ds\Dom_d^n$}}
  \put(0,50){\makebox(0,0)[br]{$\ds\PP^n_{\Dom_d^n}$}}
  \put(100,50){\makebox(0,0)[bl]{$\ds\PP^n_{\Dom_d^n}$}}
  \put(50,65){\makebox(0,0)[bc]{$\scriptstyle\F$}}
  \put(2,25){\makebox(0,0)[r]{$\scriptstyle\pi$}}
  \put(105,25){\makebox(0,0)[r]{$\scriptstyle\pi$}}
\end{picture}
\par
\noindent
with the property that for all~$A_\f\in\Dom_d^n$, the restriction
of~$\F$ to the fiber above~$A_\f$ is the map~$\f$. We first consider
the pull-back of this diagram to a subvariety~$S\subset\Dom_d^n$.

\begin{lemma}
\label{lemma:SinDomdn}
Let $S\subset\Dom_d^n$ be a irreducible subvariety, so we have a
commutative diagram \par\noindent
\begin{picture}(200,100)(-120,-20)
  \put(-5,45){\vector(1,-1){40}}
  \put(100,45){\vector(-1,-1){40}}
  \multiput(10,60)(10,0){7}{\line(1,0){5}\hspace{5pt}}
  \put(80,60){\vector(1,0){10}}
  \put(50,0){\makebox(0,0)[ct]{$\ds S$}}
  \put(-5,55){\makebox(0,0)[br]{$\ds \PP^n_S$}}
  \put(105,55){\makebox(0,0)[bl]{$\ds \PP^n_S$}}
  \put(50,65){\makebox(0,0)[bc]{$\scriptstyle\F$}}
  \put(2,25){\makebox(0,0)[r]{$\scriptstyle\pi$}}
  \put(105,25){\makebox(0,0)[r]{$\scriptstyle\pi$}}
\end{picture}
\par\noindent
Then there are constants $C_1(S)>0$ and $C_2(S)\ge0$ and a nonempty
Zariski open subset~$U_S\subset \PP^n_S$ such that
\[
  h\bigl(\Phi(P)\bigr) \ge C_1(S)h(P) - C_2(S)
  \qquad\text{for all $P\in U_S(\Qbar)$.}
\]
\end{lemma}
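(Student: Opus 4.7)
The plan is to deduce this lemma by a direct application of Theorem~\ref{thm:htlwrbd} to the restricted universal family $\F : \PP^n_S \dashrightarrow \PP^n_S$. I will verify the three hypotheses of that theorem for $W = V = \PP^n_S$ and the map $\F$.

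Since $S$ is an irreducible subvariety of $\Dom_d^n$, the product $\PP^n_S = \PP^n \times S$ is irreducible and quasi-projective, and the source and target of $\F$ both have dimension $n + \dim(S)$, so the equidimensionality hypothesis is met. The height $h(x, A_\f) = h_{\PP^n}(x) + h(A_\f)$ is by definition the restriction to $\PP^n_S$ of the Weil height on $\PP^n \times \PP^{(n+1)N-1}$ attached to $\Ocal(1,1)$; since $\Ocal(1,1)$ is ample on the ambient projective space and restrictions of ample line bundles to locally closed subvarieties remain ample, $h$ is the height of an ample divisor on $\PP^n_S$.

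The remaining hypothesis to check is that $\F$ is dominant. By construction $\F$ covers the identity on $S$, and for each $A_\f \in S$ the restriction $\F|_{\pi^{-1}(A_\f)}$ is identified with the dominant rational self-map $\f : \PP^n \dashrightarrow \PP^n$. Hence a generic fiber of $\pi$ maps under $\F$ to a Zariski-dense subset of its own image fiber, and since $S$ is irreducible the union of these dense fiber images is Zariski-dense in $\PP^n_S$; i.e., $\F$ is dominant. This fiberwise picture also shows that $\F$ is not everywhere undefined on $\PP^n_S$, so that it is a genuine rational map.

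Theorem~\ref{thm:htlwrbd} now produces $C_1(S) > 0$, a constant $C_2'(S)$, and a nonempty Zariski open subset $U_S \subset \PP^n_S$ such that
\[
  h\bigl(\F(P)\bigr) \ge C_1(S)\, h(P) - C_2'(S) \quad\text{for all } P \in U_S(\Qbar).
\]
Setting $C_2(S) := \max\bigl(C_2'(S),\, 0\bigr) \ge 0$ gives the stated estimate. The only piece with any genuine content in this reduction is the verification of dominance of $\F$, and that is essentially built into the assumption $S \subset \Dom_d^n$, so there is no serious obstacle in the proof.
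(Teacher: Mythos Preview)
Your proof is correct and follows exactly the paper's approach: the paper's entire proof reads ``Apply Theorem~\ref{thm:htlwrbd} to the rational map $\F:\PP^n_S\dashrightarrow \PP^n_S$.'' You have simply spelled out the verification of the hypotheses (equidimensionality, ampleness of the height, and dominance of $\F$) that the paper leaves implicit, and taken care of the sign of $C_2(S)$.
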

\begin{proof}
Apply Theorem~\ref{thm:htlwrbd} to the rational map
$\F:\PP^n_S\dashrightarrow \PP^n_S$.
\end{proof}

We are going to apply Lemma~\ref{lemma:SinDomdn} inductively on the
dimension of~$S$. For a given irreducible $S\subset\Dom_d^n$, we find
constants~$C_i(S)$ and an open set~$U_S\subset\PP^n_S$ as in the
lemma. The complement~$\PP^n_S\setminus U_S$ is a proper Zariski
closed subset, so it is a finite union of irreducible subvarieties,
say~$T_1\cup\cdots\cup T_r$. We separate the~$T_i$'s into two cases,
depending on whether they are horizontal or vertical.  (We say
that~$T\subset\PP^n_S$ is horizontal if~$\pi(T)=S$, and vertical
otherwise.) Let~$T$ be any one of the~$T_i$'s.
\par 
If $T$ is horizontal, then its intersection with every fiber of~$\pi$
is a proper closed subset of the fiber.  So horizontal~$T$ delineate
exceptional sets on each fiber. We let $\Hcal_S$ denote the set
of horizontal~$T_i$.
\par 
If~$T$ is vertical, then~$\pi(T)$ is a proper closed subvariety of~$S$, so
in particular it is a closed subvariety of~$\Dom_d^n$ satisfying
\text{$\dim\bigl(\pi(T)\bigr)<\dim(S)$}. (Note that this is a strict
inequality.)  We let $\Vcal_S$ denote the set of~$\pi(T_i)$ such
that that~$T_i$ is vertical.
\par
We now start the induction with $S=S_0=\Dom_d^n$. This gives us
a set of horizontal subvarieties~$\Hcal_{S_0}$, which we put aside
for later, and a set of $\Vcal_{S_0}$ consisting of proper closed
subvarieties of~$S_0$ associated to vertical subvarieties.
To ease notation, we denote these sets by~$\Hcal_0$ and~$\Vcal_0$.
For each variety~$S\in\Vcal_0$, we apply Lemma~\ref{lemma:SinDomdn} to
obtain sets of horizontal and vertical subvarieties, and we write
\[
  \Hcal_1 = \bigcup_{S\in\Vcal_0} \Hcal_S
  \qquad\text{and}\qquad
  \Vcal_1 = \bigcup_{S\in\Vcal_0} \Vcal_S.
\]
Repeating this process, we inductively obtain two sequences of finite
sets of varieties by the rule
\[
  \Hcal_{k+1} = \bigcup_{S\in\Vcal_k} \Hcal_S
  \qquad\text{and}\qquad
  \Vcal_{k+1} = \bigcup_{S\in\Vcal_k} \Vcal_S
  \qquad\text{for $k=0,1,2,\ldots\,$.}
\]
\par
By construction, the dimension of the varieties in~$\Vcal_k$ are strictly
decreasing as~$k$ increases, so there is a~$K$ such
that~$\Vcal_k=\emptyset$ for $k>K$. More precisely, we can
take $K=\dim(\Dom_d^n)=(n+1)N-1$. We now let
\[
  \overline\Hcal = \bigcup_{k=0}^K \Hcal_k
  \qquad\text{and}\qquad
  \overline\Vcal = \bigcup_{k=0}^K \Vcal_k.
\]
Associated to each~$S\in\overline\Vcal$ are constants~$C_1(S)>0$
and~$C_2(S)\ge0$, and we set
\[
  \overline C_1 = \min_{S\in\overline\Vcal} C_1(S)
  \qquad\text{and}\qquad
  \overline C_2 = \max_{S\in\overline\Vcal} C_2(S).
\]
We also let
\[
  W = \bigcup_{T\in\overline\Hcal}T \subset \PP^n_{\Dom_d^n}.
\]
Note that~$W$ is a proper algebraic subset of $\PP^n_{\Dom_d^n}$
with the property that~$W$ contains no entire fibers of
the fibration $\pi:\PP^n_{\Dom_d^n}\to\Dom_d^n$.
\par
By construction and from the inequality in Lemma~\ref{lemma:SinDomdn},
we have
\begin{equation}
  \label{eqn:hPhiP}
  h\bigl(\Phi(P)\bigr) \ge \overline C_1h(P) - \overline C_2
  \qquad\text{for all $P\in\PP^n_{\Dom_d^n}\setminus W$.}
\end{equation}
We now observe that a point $P\in\PP^n_{\Rat_d^n}$ is really a pair
$P=(x,A_\f)$, and the map~$\F$ is given by $\F(P)=\bigl(\f(x),A_\f\bigr)$.
Further, as noted earlier, the height of~$P=(x,A_\f)$ is simply the
sum $h(P)=h(x)+h(\f)$. Hence~\eqref{eqn:hPhiP} may be rewritten as
\[
  h\bigl(\f(x)\bigr) + h(\f) \ge \overline C_1\bigl(h(x)+h(\f)\bigr)
    - \overline C_2
  \quad\text{for all $(x,A_\f)\in\PP^n_{\Dom_d^n}\setminus W$.}
\]
\par
For any point $A_\f\in\Dom_d^n$, let
\[
  W_\f = \pi^{-1}(A_\f) \cap W \subset \pi^{-1}(A_\f) \cong \PP^n.
\]
By construction, the set~$W_\f$ is a proper Zariski closed subset
of~$\PP^n$.  We have proven that
\[
  h\bigl(\f(x)\bigr)  \ge \overline C_1h(x) - h(\f)
    - \overline C_2 
  \quad\text{for all $A_\f\in\Dom_d^n$ and $x\in\PP^n\setminus W_\f$.}
\]
The constants $\overline C_1$ and $\overline C_2$ depend only on~$d$
and~$n$, which completes the proof of Theorem~\ref{thm:Pnunifbd}.
\end{proof}

%%%%%%%%%%%%%%%%%%%%%%%%%%%%%%%%%%%%%%%%%%%%%%%%%%%%%%%%%%%%%%%%%%%%%%
\section{Height expansion coefficients for $\PP^n$}
\label{section:examples}

We recall that the \emph{height expansion coefficient}
of a rational map $\f:V\to W$ is defined to be the quantity
\[
  \mu(\f) = \sup_{\emptyset\ne U\subset W}
  \liminf_{\substack{P\in U(\Qbar)\\ h_W(P)\to\infty\\}}
  \frac{h_V\bigl(\f(P)\bigr)}{h_W(P)}.
\]
The value of~$\mu(\f)$ clearly depends on the choice of height
functions~$h_V$ and~$h_W$, which in turn depend on the choice of ample
divisors~$D\in\Div(V)$ and~$E\in\Div(W)$. More precisely, it follows
from~\cite[B.3.2(f)]{MR1745599} that~$\mu(\f;D,E)$ depends only on the
algebraic equivalence classes of~$D$ and~$E$.

In the special case that~$W=V$, which is of
interest for example in dynamics, it is natural to take~$D=E$, or
equivalently~$h_W=h_V$. If further~$\NS(V)$ has rank one, as happens
for example when $V=W=\PP^n$, then~$\mu(\f)$ is defined independent of
the choice of~$h_W=h_V$. 

In this section we investigate the height expansion coefficient for dominant
rational self-maps of~$\PP^n$.  For example, if $\f:\PP^n\to\PP^n$ is
a nonconstant \emph{morphism}, then
\[
  h\bigl(\f(P)\bigr)=(\deg\f)h(P)+O(1),
\]
so $\mu(\f)=\deg\f$.  On the other hand, the degree two rational map
$\f:\PP^2\dashrightarrow\PP^2$ described in the
introduction~\eqref{eqn:fP2P2X2Y2XZ} satisfies~$\mu(\f)=1$.
%% *** Check this last statement. Certainly $\mu(\f)\le1$.

We recall the definition
\[
  \overline\mu_d(\PP^n)
  \stackrel{\text{def}}{=}
  \inf_{\substack{\f:\PP^n\dashrightarrow\PP^n\\\text{dominant}\\\deg\f=d\\}}
       \mu(\f).
\]
Theorem~\ref{thm:Pnunifbd} tells us that $\overline\mu_d(\PP^n)>0$.
For $n=1$, every rational map $\PP^1\to\PP^1$ is a morphism, so
$\overline\mu_d(\PP^1)=d$.  The value of $\overline\mu_d(\PP^n)$ for
$n\ge2$ is less clear.  We are going to prove that
\[
  \overline\mu_d(\PP^n) \le \frac{1}{d^{n-1}}.
\]

\begin{question}
If $\f:\PP^n\dashrightarrow\PP^n$ is a rational map that is \emph{not}
a morphism, is it true that $\mu(\f)<\deg\f$ \textup(strict
inequality\textup)?
\end{question}

Let $\psi:\PP^n\dashrightarrow\PP^n$ be a rational map. We write
$Z_{\psi}$ for the locus of indeterminacy of~$\psi$. We also recall the
elementary (triangle inequality) height
estimate~\cite[B.2.5(a)]{MR1745599},
\begin{equation}
  \label{eqn:trianht}
  h\bigl(\psi(P)\bigr) \le (\deg\psi)h(P)+O(1)
  \qquad\text{for all $P\in\PP^n(\Qbar)\setminus Z_{\psi}$.}
\end{equation}
A birational map $\f:\PP^n\dashrightarrow\PP^n$ is called a
\emph{regular affine automorphism} if it is not a morphism, if it
restricts to an automorphism~$\AA^n\to\AA^n$, and if $Z_{\f}\cap
Z_{\f^{-1}}=\emptyset$.  

\begin{proposition}
\label{prop:Pnhtratsmall}
Let $n\ge2$ and
let $\f:\PP^n\dashrightarrow\PP^n$ be a regular affine automorphism.
Then
\[
  \mu(\f) = \frac{1}{(\deg\f)^{\frac{n}{1+\dim Z_{\f}}-1}}.
\]
In particular, there exist regular affine automorphisms of~$\PP^n$ of
every degree $d\ge2$ satisfying $\mu(\f)=d^{-(n-1)}$. Hence
\[
  \overline\mu_d(\PP^n) \le \frac{1}{d^{n-1}}.
\]
\end{proposition}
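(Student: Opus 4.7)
The plan is to prove the cleaner identity $\mu(\f) = 1/\deg(\f^{-1})$, and then translate it to the stated formula via the standard degree-dimension relation for regular affine automorphisms. Writing $d = \deg\f$, $e = \deg\f^{-1}$, and $s = \dim Z_\f$, this relation gives $\dim Z_{\f^{-1}} = n - 2 - s$ together with the algebraic-stability identity $d^{n-1-s} = e^{s+1}$; solving yields $e = d^{n/(s+1) - 1}$, so $1/e$ matches the right-hand side of the proposition. The ``in particular'' assertion then reduces to exhibiting, for each $d \ge 2$ and $n \ge 2$, a regular affine automorphism of $\PP^n$ with $\dim Z_\f = 0$; for $n = 2$ the H\'enon maps $\f(x,y) = (y,\, y^d + ax)$ with $a \ne 0$ do the job (one checks $Z_\f = \{[1{:}0{:}0]\}$ and $Z_{\f^{-1}} = \{[0{:}1{:}0]\}$, hence $Z_\f \cap Z_{\f^{-1}} = \emptyset$), and analogous constructions work for $n \ge 3$.

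For the lower bound $\mu(\f) \ge 1/e$, I would apply the triangle-inequality estimate~\eqref{eqn:trianht} to the degree-$e$ rational map $\f^{-1}$. Set $U := \f^{-1}(\PP^n \setminus Z_{\f^{-1}}) \setminus Z_\f$, a nonempty Zariski open subset of $\PP^n$ because $\f$ is birational and $Z_{\f^{-1}} \subsetneq \PP^n$. For $P \in U$ one has $\f(P) \notin Z_{\f^{-1}}$, so
\[
h(P) = h\bigl(\f^{-1}(\f(P))\bigr) \le e\,h\bigl(\f(P)\bigr) + O(1),
\]
which rearranges to $h(\f(P))/h(P) \ge 1/e - O(1/h(P))$. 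Taking the $\liminf$ as $h(P) \to \infty$ over $U$ gives $\mu(\f) \ge 1/e$.

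For the upper bound $\mu(\f) \le 1/e$, the task is to show that every nonempty open $U \subset \PP^n$ contains a sequence $P_k$ with $h(P_k) \to \infty$ and $h(\f(P_k))/h(P_k) \to 1/e$. My approach is to invert the triangle inequality: choose an affine line $L \subset \AA^n \subset \PP^n$, parametrized as $Q(t) = Q_0 + t\vec{v}$, with direction vector $\vec{v}$ chosen generically enough that the point at infinity $[\vec{v}{:}0]$ avoids $Z_{\f^{-1}}$. Taking $Q_k = Q(t_k)$ for $t_k \in \ZZ$ with $|t_k| \to \infty$, we have $Q_k \in \f(U)$ for all large $k$ (the complement of the dense constructible set $\f(U)$ meets $L$ in only finitely many points), so $P_k := \f^{-1}(Q_k)$ lies in $U$. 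Because the limiting direction of $Q_k$ is not in $Z_{\f^{-1}}$, the leading homogeneous form of $\f^{-1}$ does not vanish there, and a place-by-place analysis yields $h(\f^{-1}(Q_k)) = e\,h(Q_k) + O(1)$. Hence
\[
\frac{h(\f(P_k))}{h(P_k)} = \frac{h(Q_k)}{h(\f^{-1}(Q_k))} \longrightarrow \frac{1}{e}.
\]

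The main obstacle will be the ``no-cancellation'' equality $h(\f^{-1}(Q_k)) = e\,h(Q_k) + O(1)$ underlying the upper bound. Writing $\f^{-1} = [G_0 {:} \cdots {:} G_n]$ with the $G_i$ homogeneous of degree $e$ and no common factor, this amounts to showing that at every place $v$ of a number field of definition, $|G_i(Q_k)|_v$ is of the expected order $(\max_j |Q_{k,j}|_v)^e$ for at least one~$i$. At the archimedean place this is immediate from the non-vanishing of $G_i(\vec{v},0)$ for some $i$, which is equivalent to $[\vec{v}{:}0] \notin Z_{\f^{-1}}$; at all but finitely many non-archimedean places the coefficients of $G_i$ are $v$-integral, and the same estimate follows from a resultant-type argument. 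Summing contributions over all places produces the desired additive $O(1)$ equality, which together with the lower bound proves $\mu(\f) = 1/e$.
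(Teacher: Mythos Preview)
Your argument is correct and takes a genuinely different route from the paper's. Both proofs reduce the proposition to the identity $\mu(\f)=1/\deg(\f^{-1})$ and then invoke Sibony's degree--dimension relation, but the two halves of that identity are established quite differently.

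For the lower bound $\mu(\f)\ge 1/e$, the paper appeals to the Kawaguchi--Lee inequality~\eqref{eqn:raaineq} together with the triangle inequality for~$\f^{-1}$, whereas you observe that the triangle inequality for~$\f^{-1}$ alone already suffices: $h(P)=h\bigl(\f^{-1}(\f(P))\bigr)\le e\,h(\f(P))+O(1)$. Your route here is strictly more elementary.

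For the upper bound $\mu(\f)\le 1/e$, the paper builds the witness sequence dynamically, taking $Q_k=\f^{-k}P$ for a point~$P$ with Zariski dense orbit and using Kawaguchi's canonical heights $\hhat^\pm$ to control $h(\f Q_k)/h(Q_k)$. You instead construct the sequence geometrically along a generic affine line~$L$ whose closure avoids~$Z_{\f^{-1}}$. One remark that would streamline your ``no-cancellation'' step: since $\f^{-1}$ is an automorphism of~$\AA^n$, the indeterminacy locus $Z_{\f^{-1}}$ lies entirely in the hyperplane at infinity, so your hypothesis $[\vec v:0]\notin Z_{\f^{-1}}$ already forces $\bar L\cap Z_{\f^{-1}}=\emptyset$. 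Then $\f^{-1}|_{\bar L}:\PP^1\to\PP^n$ is an honest degree-$e$ morphism, and functoriality of heights for morphisms gives $h(\f^{-1}(Q_k))=e\,h(Q_k)+O(1)$ in one stroke, with no place-by-place analysis needed. With that clarification your upper-bound argument is complete.

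What each approach buys: yours avoids the canonical-height machinery entirely (no need for \eqref{eqn:raaineq}, no Zariski-dense-orbit hypothesis), giving a shorter and more self-contained proof. The paper's approach, on the other hand, exhibits the canonical heights $\hhat^\pm$ as the natural invariants governing the asymptotics, which ties the computation to the broader dynamical framework and generalizes more readily (as in the K3 example of Proposition~\ref{prop:K3}).
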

\begin{proof}
To ease notation, we let
\[
  d_1=\deg(\f),\qquad
  d_2=\deg(\f^{-1}).
\]
We make use of Kawaguchi's theory of canonical heights for regular
affine automorphisms; see~\cite{arxiv0405007}
or~\cite[Exercises~7.17--7.22]{MR2316407}. Kawaguchi constructed
canonical heights under the assumption that~$\f$ satisfies the
following height estimate:
\begin{multline}
  \label{eqn:raaineq}
  \smash[b]{
    \frac{h\bigl(\f(P)\bigr)}{d_1}
    +\frac{h\bigl(\f^{-1}(P)\bigr)}{d_2}
    \ge \left(1+\frac{1}{d_1d_2}\right)h(P)+O(1)
  }
  \\
  \text{for all $P\in\AA^n(\Qbar)$.}
\end{multline}
This estimate was proven for~$n=2$ by Kawaguchi~\cite{arxiv0405007}
and by Chong Gyu Lee~\cite{leepaper2} in general.
\par
Thus there are canonical height functions~$\hhat^+$ and~$\hhat^-$ such
that for all~$P\in\AA^n(\Qbar)$,
\begin{align*}
  \hhat^+(\f P)&=d_1\hhat^+(P),
  &\hhat^{\pm}(P)&\le h(P)+O(1),\\
  \hhat^-(\f^{-1} P)&=d_2\hhat^-(P),
  &h(P) &\le \hhat^+(P)+\hhat^-(P)+O(1).
\end{align*}
We now fix a point~$P\in\AA^n(\Qbar)$ having Zariski dense orbit under
iteration of~$\f$. (It is not hard to see that this is true for most points.)
For each~$k\ge0$ we let $Q_k=\f^{-k}P$ and we compute
\begin{align*}
  h(\f Q_k) &\le \hhat^+(\f Q_k)+\hhat^-(\f Q_k)+O(1) \\
  &= d_1\hhat^+(Q_k) + d_2^{-1}\hhat^-(Q_k) + O(1) 
    &&\text{from properties of $\hhat^\pm$,}\\
  &= d_1\hhat^+(\f^{-k}P) + d_2^{-1}\hhat^-(Q_k) + O(1) 
    &&\text{since $Q_k=\f^{-k}P$,}\\
  &= d_1^{1-k}\hhat^+(P) + d_2^{-1}\hhat^-(Q_k) + O(1) 
    &&\text{since $\hhat^+\circ\f^{-1}=d_1^{-1}\hhat^+$,} \\
  &\le d_1^{1-k}\hhat^+(P) + d_2^{-1}h(Q_k) + O(1)
    &&\text{since $\hhat^-\le h+O(1)$.}
\end{align*}
Hence
\[
  \mu(\f)
  \le \lim_{k\to\infty} \frac{h(\f Q_k)}{h(Q_k)}
  \le \lim_{k\to\infty} 
      \left\{\frac{d_1^{1-k}\hhat^+(P)+O(1)}{h(Q_k)} + d_2^{-1}\right\} 
  = \frac{1}{d_2}.
\]
\par
For the other direction, we compute
\begin{align*}
  \left(1+\frac{1}{d_1d_2}\right)h(P)
  &\le \frac{h\bigl(\f(P)\bigr)}{d_1}
    + \frac{h\bigl(\f^{-1}(P)\bigr)}{d_2} + O(1)
  &&\text{from~\eqref{eqn:raaineq},} \\
  &\le \frac{h\bigl(\f(P)\bigr)}{d_1} + h(P) + O(1)
  &&\text{from \eqref{eqn:trianht}.}
\end{align*}
Now a little bit of algebra yields
\[
  \frac{1}{d_2} \le  \frac{h\bigl(\f(P)\bigr)}{h(P)}
    + O\left(\frac{1}{h(P)}\right).
\]
This holds for all $P\in\AA^n(\Qbar)$, so
\[
  \frac{1}{d_2} \le
  \sup_{\emptyset\ne U\subset \PP^n}
  \liminf_{\substack{P\in U(\Qbar)\\ h(P)\to\infty\\}}
  \frac{h\bigl(\f(P)\bigr)}{h(P)} = \mu(\f).
\]
\par
This completes the proof that $\mu(\f)=1/d_2$. In order to express
this bound in terms of the degree of~$\f$, we let
\[
  \ell_1=1+\dim Z_{\f},\qquad
  \ell_2=1+\dim Z_{\f^{-1}},
\]
and use the relations~\cite[Proposition~2.3.2]{MR1760844}
\[
  \ell_1+\ell_2=n\qquad\text{and}\qquad
  d_2^{\ell_1}=d_1^{\ell_2}.
\]
Thus
\[
  d_2 = d_1^{\ell_2/\ell_1} = d_1^{n/\ell_1-1},
\]
so $\mu(\f)=d_2^{-1}=d_1^{-(n/\ell_1-1)}$.
\par
Finally, for any $n\ge2$ and $d\ge2$ it is easy to write down regular
affine automorphisms of~$\PP^n$ of degree~$d$ 
for which~$Z_{\f}$ has dimension zero,
and for such maps we have~$\mu(\f)=d^{-(n-1)}$.
\end{proof}

We next compute the height expansion coefficient of a rational map
that is not an automorphism.

\begin{proposition}
\label{prop:rhoxinv}
The dominant rational map $\f:\PP^n\dashrightarrow\PP^n$ 
defined by
\[
  \f\bigl([X_0,\ldots,X_n]\bigr) = [X_0^{-1},\ldots,X_n^{-1}]
\]
has height expansion ratio
\[
  \mu(\f) = \frac{1}{n} = \frac{1}{\deg\f}.
\]
\end{proposition}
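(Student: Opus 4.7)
The plan is to prove $\mu(\f)\ge 1/n$ and $\mu(\f)\le 1/n$ separately, exploiting the fact that~$\f$ is a birational involution. Clearing denominators writes
\[
  \f([X_0,\ldots,X_n]) = \Bigl[\prod_{j\ne0}X_j,\,\prod_{j\ne1}X_j,\,\ldots,\,\prod_{j\ne n}X_j\Bigr];
\]
each coordinate has degree~$n$ and no non-constant polynomial divides all of them, so $\deg\f=n$, and $\f\circ\f$ equals the identity on a dense open subset of~$\PP^n$.

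The lower bound is essentially free. Applying the triangle inequality~\eqref{eqn:trianht} to~$\f$ at the point $\f(P)$ and using $\f(\f(P))=P$ on a dense open set,
\[
  h(P) \;=\; h\bigl(\f(\f(P))\bigr) \;\le\; n\,h\bigl(\f(P)\bigr) + O(1),
\]
so $h(\f(P))/h(P)\ge 1/n-O(1/h(P))$ and therefore $\mu(\f)\ge 1/n$.

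For the upper bound, the plan is to exhibit, inside any prescribed nonempty open $U\subset\PP^n$, rational points $P$ on which $\f$ essentially saturates the opposite direction of the triangle inequality. Let $N$ be a large integer, let $p_0,\ldots,p_n$ be $n+1$ distinct primes in $[N,2N]$, and set $Q=[p_0,\ldots,p_n]$. Each $p_j$ appears in exactly $n$ of the $n+1$ coordinates $\prod_{j'\ne i}p_{j'}$ of $\f(Q)$, so those integers are globally coprime in~$\ZZ$. Consequently $h(Q)=\log N+O(1)$ and $h(\f(Q))=n\log N+O(1)$, so with $P=\f(Q)$ and $\f(P)=Q$,
\[
  \frac{h(\f(P))}{h(P)} \;=\; \frac{h(Q)}{h(\f(Q))} \;\longrightarrow\; \frac{1}{n}
  \qquad\text{as $N\to\infty$.}
\]

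The main obstacle is the Zariski-density step: for the given~$U$ one must choose the primes so that $P=\f(Q)\in U$, equivalently $Q\in\f^{-1}(U)=\f(U)$, for arbitrarily large~$N$. I would handle this by an elementary induction on the number of variables. Given any nonzero polynomial $F\in\Qbar[X_0,\ldots,X_n]$ vanishing on $\PP^n\setminus\f(U)$, one first uses the inductive hypothesis, applied to an appropriate coefficient of $F$ regarded as a polynomial in~$X_n$, to choose distinct primes $p_0,\ldots,p_{n-1}\in[N,2N]$ with $F(p_0,\ldots,p_{n-1},X_n)\not\equiv 0$. This univariate polynomial then has at most $\deg F$ zeros, while by the prime number theorem the number of primes in $[N,2N]$ grows like $N/\log N$, so for all sufficiently large~$N$ there is a prime $p_n\in[N,2N]$ with $F(p_0,\ldots,p_n)\ne 0$ and $p_n\ne p_0,\ldots,p_{n-1}$. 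Combining the two bounds yields $\mu(\f)=1/n=1/\deg\f$.
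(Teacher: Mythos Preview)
Your proof is correct and follows the same two-part strategy as the paper: the lower bound $\mu(\f)\ge 1/n$ via the triangle inequality~\eqref{eqn:trianht} applied to the involution at~$\f(P)$, and the upper bound via explicit rational points with nearly equal integer coordinates whose images under~$\f$ realize the ratio~$1/n$. Your choice of distinct primes in~$[N,2N]$ is a tidy special case of the paper's coprime integers in~$(T^{1-\e},T)$ (and in fact makes the coprimality of the product coordinates automatic), and you supply the Zariski-density verification that the paper leaves as an assertion.
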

\begin{proof}
From the definition, it is clear that~$\f$ is dominant and satisfies
\[
  \f\bigl(\f(X)\bigr) = X.
\]
Fix $\e>0$, let~$T$ be a large number, and choose integers
$a_0,\ldots,a_n\in\ZZ$ satisfying
\[
  T^{1-\e} < a_i < T~\text{for all~$i$ \quad and}\quad
  \gcd(a_0,\ldots,a_n)=1.
\]
Consider the point
\[
  P = \f\bigl([a_0,\ldots,a_n]\bigr)
\]
and it's image~$\f(P)=[a_0,\ldots,a_n]$. (We are using the fact
that~$\f\circ\f$ is the identity map.) The height of~$\f(P)$ is
given by
\begin{equation}
  \label{eqn:Hle}
  H\bigl(\f(P)\bigr)
  = H\bigl([a_0,\ldots,a_n]\bigr)
  = \max_{0\le i\le n} |a_i|
  \le T.
\end{equation}
Next we observe that the coordinates of
\[
  P = [\ldots,a_0\cdots a_{j-1}a_{j+1}\cdots a_n,\ldots]
\]
are relatively prime integers, so
%% For example, if $p$ divides the first coordinate $a_1\ldots
%% a_n$, then it divides exactly one of the~$a_j$ with~$1\le j\le n$. But
%% then it does not divide the $j$'th coordinate of~$P$, which is the
%% product of the~$a_i$ excluding~$a_j$.
\begin{equation}
  \label{eqn:Hge}
  H(P) = \max_{0\le j\le n} |a_0\cdots a_{j-1}a_{j+1}\cdots a_n|
  \ge T^{(1-\e)n}.
\end{equation}
Combining~\eqref{eqn:Hle} and~\eqref{eqn:Hge} and taking logarithms
yields
\[
  h\bigl(\f(P)\bigr) \le \frac{1}{(1-\e)n} h(P).
\]
The set of points for which this is valid is Zariski dense, so
\[
  \mu(\f) \le \frac{h\bigl(\f(P)\bigr)}{h(P)} \le  \frac{1}{(1-\e)n}.
\]
This holds for every~$\e>0$, which gives the upper bound $\mu(\f)\le 1/n$.
\par
To prove a lower bound for~$\mu(\f)$, we note that for any rational
map $\psi:\PP^n\dashrightarrow\PP^n$, the triangle inequality 
gives an elementary upper bound
\[
  h\bigl(\psi(P)\bigr) \le (\deg\psi)h(P) + O(1).
\]
Our map~$\f$ has degree~$n$, so 
$h\bigl(\f(P)\bigr) \le nh(P) + O(1)$. Replacing~$P$ with~$\f(P)$ and
using the fact that~$\f^2(P)=P$ yields
\[
  h(P) \le n h\bigl(\f(P)\bigr) + O(1),
\]
so
\[
  \mu(\f) = \liminf_{h(P)\to\infty} \frac{h\bigl(\f(P)\bigr)}{h(P)}
  \ge \liminf_{h(P)\to\infty} 
   \left\{\frac{1}{n} + O\left(\frac{1}{h(P)}\right) \right\}
  = \frac{1}{n}.
\]
This completes the proof that $\mu(\f)=n$.
\end{proof}

\begin{question}
Let $\f:\PP^n\dashrightarrow\PP^n$ be a dominant rational map. Based
on the examples in Propositions~\ref{prop:Pnhtratsmall}
and~\ref{prop:rhoxinv}, it is natural to ask if $\mu(\f)$ always has the
form $d^\kappa$ for some \emph{rational number}~$\kappa$.
\end{question}

\begin{remark}
If $\f,\psi:V\dashrightarrow V$ are rational self-maps of a
variety~$V$, it is natural to ask if there is a relation
between~$\mu(\f)$,~$\mu(\psi)$, and~\text{$\mu(\f\circ\psi)$}. In particular,
for applications to dynamics it would be interesting to
relate~$\mu(\f^n)$ to~$\mu(\f)$. For example, is
$\mu(\f^2)\le\mu(\f)$?  The map described in
Proposition~\ref{prop:rhoxinv} shows that the answer is no, since that
map satisfies~$\f^2(P)=P$, so $\mu(\f)=1/n$ and $\mu(\f^2)=1$.  For 
further discussion of the relation between~$\mu(\f)$,~$\mu(\psi)$,
and~\text{$\mu(\f\circ\psi)$}, see~\cite{leepaper2}.
\end{remark}

As noted earlier, if $\f:\PP^n\to\PP^n$ is a morphism of degree~$d$,
then $\mu(\f)=d$, so nonconstant \emph{morphisms}~$\PP^n\to\PP^n$
never have small height expansion coefficients. It turns out that
self-morphisms of other types of varieties may have height
expansion coefficients that are arbitrarily small, as in the following
example.

\begin{proposition}
\label{prop:K3}
Let $V\subset\PP^2\times\PP^2$ be a non-singular variety of
type $(2,2)$, so~$V$ is a~K3 surface with noncommuting
involutions~$\iota_1$ and~$\iota_2$, and let
$\f=\iota_1\circ\iota_2:V\to V$ be their composition, so~$\f$ is an
automorphism of~$V$. Further, let~$D_1,D_2\in\Div(V)$ be the
pull-backs to~$V$ of divisors of the form~$H\times\PP^2$
and~$\PP^2\times H$, respectively, and let~$D\in\ZZ D_1+\ZZ D_2\subset
\Div(V)$ be an ample divisor in the linear span of~$D_1$ and~$D_2$.
Then for all~$n\ge1$, the height expansion coefficient of~$\f^n$
relative to the divisor~$D$ equals
\[
  \mu(\f^n) = (2+\sqrt3)^{-2n}.
\]
\end{proposition}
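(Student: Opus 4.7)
The plan is to reduce the computation of $\mu(\f^n)$ to the eigenvalues of $\f^*$ on $\NS(V)\otimes\RR$, and then apply Silverman's canonical heights for K3 surface automorphisms (Silverman, \textit{Invent.\ Math.}\ 105, 1991).

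First I would compute the action of $\f^*$ on the rank-$2$ sublattice $\ZZ D_1 + \ZZ D_2 \subset \NS(V)$. Each $\iota_i$ is the deck involution of the degree-$2$ projection $V \to \PP^2$, so $\iota_i^* D_i = D_i$ and $D_j + \iota_i^* D_j$ is a pullback from $\PP^2$, hence a multiple of $D_i$. Using the intersection numbers on the K3 surface $V$ (namely $D_1^2 = D_2^2 = 2$ and $D_1 \cdot D_2 = 4$), intersecting both sides with $D_1$ and invoking $\iota_i^*$-invariance of the pairing pins down the multiple, giving $\iota_1^* D_2 = 4D_1 - D_2$ and $\iota_2^* D_1 = 4D_2 - D_1$. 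Hence the matrix of $\f^* = \iota_2^* \iota_1^*$ in the basis $(D_1,D_2)$ is $\bigl(\begin{smallmatrix} -1 & -4 \\ 4 & 15 \end{smallmatrix}\bigr)$, with characteristic polynomial $t^2 - 14t + 1$ and eigenvalues $\lambda_\pm = 7 \pm 4\sqrt{3} = (2\pm\sqrt{3})^2$ satisfying $\lambda_+\lambda_- = 1$.

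Next I would invoke the canonical height theorem: there exist functions $\hhat^\pm : V(\Qbar) \to [0,\infty)$ with $\hhat^\pm\circ\f = \lambda_\pm\hhat^\pm$, and, for the chosen ample $D\in\ZZ D_1 + \ZZ D_2$, a decomposition $h_D = c_+\hhat^+ + c_-\hhat^- + O(1)$ with constants $c_\pm > 0$ (the coordinates of $D$ in the $\f^*$-eigenbasis, suitably normalized). Iterating gives $h_D(\f^n P) = c_+\lambda_+^n\hhat^+(P) + c_-\lambda_-^n\hhat^-(P) + O(1)$, so
\[
  h_D(\f^n P) - \lambda_-^n h_D(P) = c_+(\lambda_+^n - \lambda_-^n)\hhat^+(P) + O(1) \ge -O(1)
\]
since $\hhat^+\ge 0$ and $\lambda_+^n \ge \lambda_-^n$, yielding the lower bound $\mu(\f^n) \ge \lambda_-^n$ in every nonempty Zariski open set.

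For the matching upper bound, fix a nonempty open $U\subset V$ and choose $Q \in V(\Qbar)$ with $\hhat^-(Q)>0$ and Zariski-dense $\f$-orbit (generic rational points qualify, since positive-entropy K3 automorphisms admit no proper infinite invariant algebraic subset beyond finitely many $\f$-periodic curves). The points $P_k := \f^{-k}(Q)$ satisfy $\hhat^+(P_k) = \lambda_+^{-k}\hhat^+(Q)$ and $\hhat^-(P_k) = \lambda_+^{\,k}\hhat^-(Q)$, lie in $U$ for infinitely many $k$, and make both sums below dominated by their $\hhat^-$-terms, so
\[
  \frac{h_D(\f^n P_k)}{h_D(P_k)}
  = \frac{c_+\lambda_+^{\,n-k}\hhat^+(Q) + c_-\lambda_+^{\,k-n}\hhat^-(Q) + O(1)}{c_+\lambda_+^{-k}\hhat^+(Q) + c_-\lambda_+^{\,k}\hhat^-(Q) + O(1)} \longrightarrow \lambda_+^{-n} = \lambda_-^n.
\]
Hence $\liminf_U \le \lambda_-^n$, completing the proof that $\mu(\f^n) = \lambda_-^n = (2+\sqrt{3})^{-2n}$. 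The main obstacle is marshalling the two background inputs: the canonical-height decomposition just cited (which requires care because the eigen-divisors $D_\pm\in\NS(V)\otimes\RR$ have irrational coefficients), and the density of a $\Qbar$-orbit that meets every nonempty open $U$ while having $\hhat^-$ positive. Both are standard for positive-entropy K3 automorphisms but need citation rather than re-derivation; the rest is linear algebra.
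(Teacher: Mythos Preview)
Your proposal is correct and follows essentially the same approach as the paper: both use the canonical heights~$\hhat^\pm$ for the K3 automorphism, obtain the lower bound $\mu(\f^n)\ge\lambda_-^n$ from the identity $h_D(\f^nP)-\lambda_-^n h_D(P)=c_+(\lambda_+^n-\lambda_-^n)\hhat^+(P)+O(1)$, and obtain the matching upper bound by evaluating the height ratio along backward iterates $\f^{-k}(Q)$ of a point with Zariski-dense orbit. The only difference is that you supply the explicit linear-algebra computation of the eigenvalues $\lambda_\pm=(2\pm\sqrt3)^2$ from the intersection form, whereas the paper simply quotes these facts and the canonical-height properties from the K3 reference.
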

\begin{proof}
For basic properties of the K3 surface~$V$,
see~\cite{silverman:K3heights} or~\cite[\S7.4]{MR2316407}.
To ease notation, we let $\a=2+\sqrt3$, and we define divisors
$E^+,E^-\in\Div(V)\otimes\RR$ by
\[
  E^+ = -D_1+\a D_2\qquad\text{and}\qquad E^-=\a D_1-D_2.
\]
We write the given divisor~$D$ as~$D=aE^++bE^-$ and note
that the ampleness of~$D$ is equivalent to~$a>0$ and~$b>0$;
see~\cite{silverman:K3heights}.
\par
There are canonical heights~$\hhat^+$ and~$\hhat^-$ associated
to~$\f$, with properties similar to the canonical heights on~$\PP^2$
described in the proof of Proposition~\ref{prop:Pnhtratsmall}. More
precisely, as described in~\cite{silverman:K3heights}
and~\cite[\S7.4]{MR2316407}, we have
\begin{align*}
  \hhat^+(\f P)&=\a^2\hhat^+(P),
  \hhat^+(P) &= h_{E^+}(P) + O(1), \\
  \hhat^-(\f P)&=\a^{-2}\hhat^-(P),
  \hhat^-(P) &= h_{E^-}(P) + O(1).
\end{align*}
\par
We fix a point~$P\in V(\Qbar)$ with Zariski dense $\f$-orbit,
and for each~$k\ge0$ we let $Q_k=\f^{-k}(P)$. Then for all $n\ge0$ we have
\begin{align*}
  h_D\bigl(\f^n(Q_k)\bigr)
  &= a\hhat^+\bigl(\f^n(Q_k)\bigr)+b\hhat^-\bigl(\f^n(Q_k)\bigr)+O(1) \\
  &=a\a^{2n-2k}\hhat^+(P)+b\a^{-2n}\hhat^-(Q_k) + O(1).
\end{align*}
Hence
\begin{align*}
  \mu(\f^n)
  &\le \lim_{k\to\infty} \frac{h_D\bigl(\f^n(Q_k)\bigr)}{h_D(Q_k)}
   = b\a^{-2n}\lim_{k\to\infty} \frac{\hhat^-(Q_k)}{h_D(Q_k)}\\
  &= b\a^{-2n}\lim_{k\to\infty} \frac{\a^{2k}\hhat^-(P)}
       {a\a^{-2k}\hhat^+(P)+b\a^{2k}\hhat^-(P)+O(1)} 
  =\a^{-2n}.
\end{align*}
\par
For the other inequality, we note that
\begin{align*}
  h_D\bigl(&\f^n(P)\bigr)-\a^{-2n}h_D(P)\\
  &=\bigl(a\a^{2n}\hhat^+(P)+b\a^{-2n}\hhat^-(P)\bigr)
   - \a^{-2n}\bigl(a\hhat^+(P)+b\hhat^-(P)\bigr) +O(1)\\
  &=a(\a^{2n}-\a^{-2n})\hhat^+(P)+O(1).
\end{align*}
Hence
\begin{align*}
  \mu(\f^n)
  &= \liminf_{h_D(P)\to\infty} \frac{h_D\bigl(\f^n(P)\bigr)}{h_D(P)}\\
  &\ge \a^{-2n} + \liminf_{h_D(P)\to\infty} 
        \frac{a(\a^{2n}-\a^{-2n})\hhat^+(P)+O(1)}{h_D(P)}
  \ge \a^{-2n},
\end{align*}
since $a>0$ and $\a^{2n}-\a^{-2n}\ge0$. 
\end{proof}

\begin{remark}
We observe that Proposition~\ref{prop:K3} provides an example in which
the N\'eron--Severi group~$\NS(V)$ has rank greater than one, but the
height expansion coefficient is independent of the divisor class
associated to the chosen height function. In general, for a given
variety~$V$ and map~$\f:V\to V$, it might be interesting to study the
association
\[
  \NS(V)\otimes\RR\longrightarrow\RR,\qquad
  [D]\longmapsto\mu(\f;h_D).
\]
\end{remark}

%%%%%%%%%%%%%%%%%%%%%%%%%%%%%%%%%%%%%%%%%%%%%%%%%%%%%%%%%%%%%%%%%%%%%%
\section{An application to specialization maps}
\label{section:specialization}

We apply Theorem~\ref{thm:htlwrbd} to prove a specialization result.
Specialization theorems over one-dimensional bases are known for
families of abelian varieties~\cite{MR703488} and for products of
multiplicative groups~\cite{BoMaZa99}.  Harbegger~\cite{Hab2,Hab1} has
recently proven strong results for intersections $X^{\circ a}\cap
G^{[\dim X]}$, where~$G$ is a torus or an abelian variety, $G^{[n]}$
is the set of codimension~$n$ subgroups of~$G$, and~$X^{\circ a}$ is
the nonanomalous part of~$X$, and he has announced a forthcoming work
dealing with the case that~$G$ is a semiabelian variety. An immediate
application of Theorem~\ref{thm:htlwrbd} is a complementary
specialization result for dominant rational maps to semiabelian
varieties.

\begin{corollary}
\label{cor:spec}
Let $G/\Qbar$ be a semiabelian variety, let $W/\Qbar$ be a 
projective variety, and let $\f:W\dashrightarrow G$ be a dominant
rational map. Assume further that $\dim(W)=\dim(G)$.
Then there is a nonempty Zariski open subset $U\subset W$ such that
\[
  \bigl\{P\in U(\Qbar) : \text{$\f(P)$ is a torsion point} \bigr\}
\]
is a set of bounded height.
\end{corollary}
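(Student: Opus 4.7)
The plan is to combine Theorem~\ref{thm:htlwrbd} with the standard fact that the torsion subgroup of a semiabelian variety forms a set of bounded Weil height.

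First I would fix a projective compactification $\bar G\supset G$ (which exists by Nagata together with resolution of singularities) and choose an ample divisor on $\bar G$; this gives, by restriction, a Weil height $h_G$ on $G(\Qbar)$. Since $\f:W\dashrightarrow G$ is a dominant rational map between quasi-projective varieties of equal dimension, Theorem~\ref{thm:htlwrbd} applies directly and produces a nonempty Zariski open set $U\subset W$ together with constants $C_1>0$ and $C_2$ such that
$$h_G\bigl(\f(P)\bigr) \;\ge\; C_1\, h_W(P) - C_2 \qquad\text{for all $P\in U(\Qbar)$.}$$

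Second, I would invoke the classical fact that there is a constant $B=B(G,h_G)$ with $h_G(Q)\le B$ for every $Q\in G_{\tors}$. For an abelian variety this is immediate from Néron--Tate: the canonical height vanishes on torsion and differs from any Weil height by $O(1)$. For an algebraic torus $\GG_m^r$, the torsion subgroup consists of tuples of roots of unity, which have height zero in the standard projective compactification. For a general semiabelian variety, which sits in an exact sequence $0\to T\to G\to A\to 0$, one combines these two bounds via a suitable equivariant (or toroidal) compactification $\bar G$, or one appeals to the canonical-height machinery for semiabelian varieties (as developed, for instance, by David and Philippon), which produces a height that vanishes on $G_{\tors}$ and agrees with $h_G$ up to $O(1)$.

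Combining the two inequalities, every $P\in U(\Qbar)$ with $\f(P)\in G_{\tors}$ satisfies
$$h_W(P) \;\le\; \frac{B+C_2}{C_1},$$
which is precisely the desired bounded-height conclusion. The main obstacle is not the application of Theorem~\ref{thm:htlwrbd}, which is essentially formal here, but rather the torsion-height bound in the second step: for abelian varieties and for tori this is elementary, and the only real work is checking that an appropriate compactification $\bar G$ of the semiabelian variety and an ample divisor on it can be chosen so that both canonical-height inputs assemble consistently. Once that is arranged, the corollary follows at once.
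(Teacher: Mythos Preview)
Your proof is correct and follows essentially the same route as the paper: apply Theorem~\ref{thm:htlwrbd} to obtain $U$, $C_1>0$, $C_2$, then use the bounded height of torsion on a semiabelian variety and combine. The paper simply cites Cohen~\cite{MR833644} for the torsion-height bound rather than sketching it, but otherwise the argument is identical.
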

\begin{proof}
We let~$U\subset W$ be as in Theorem~\ref{thm:htlwrbd} for the map~$\f$,
so
\[
  h_G\bigl(\f(P)\bigr) \ge C_1h_W(P)-C_2
  \quad\text{for all $P\in U(\Qbar)$.}
\]
It is well-known that the height of torsion points on tori and on
abelian varieties are bounded, and the same is true more generally for
semiabelian varieties; see for example~\cite[appendix]{MR833644}.
Thus there is a~$C_3$ such that~$h_G(Q)\le C_3$ for all $Q\in
G(\Qbar)_\tors$. Hence 
\begin{align*}
  P\in U(\Qbar)~&\text{and}~\f(P)\in G_\tors\\
  &\Longrightarrow 
  h_W(P) \le C_1^{-1}\bigl(h_G\bigl(\f(P)\bigr)+C_2\bigr)
  \le C_1^{-1}(C_3+C_2).
\end{align*}
%% *** Remark: According to math reviews, the appendix to \cite{MR833644}
%% includes a proof, due to Waldschmidt, that heights of torsion points
%% on semiabelian varieties are bounds. In the main part of the paper,
%% Paula proves that for extensions by~$\GG_a$, the height is not
%% bounded. For $p$-torsion, it may grow as rapidly as $O(\log p)$.
%% (This paper contains the proof for ordinary primes, a followup
%% article~\cite{MR1078215} has the proof for supersingular primes.)
%%    Paper of David and Philippon has a definition of canonical heights on
%% semiabelian varieties~\cite{MR1799094}.
%%    There is a also a paper~\cite{MR1697451} which deals with computing
%% canonical hts using group extensions. Any chance it could be useful
%% for Lehmer problem?
\end{proof}

\begin{remark}
We mention that a version of Corollary~\ref{cor:spec} remains valid
when~$\f$ is not dominant. More precisely, if the image of~$\f$ is not
contained in the translate of a subgroup of~$G$, then $\f(W)\cap
G_\tors$ is not Zariski dense in~$\f(W)$, so~$\f^{-1}(G_\tors)$ is not
Zariski dense in~$W$. This follows immediately from a general version
of the Manin--Mumford conjecture for semiabelian varieties proven by
Hindry~\cite{MR969244}, generalizing the proof for tori by
Laurent~\cite{MR767195} and for abelian varieties by
Raynaud~\cite{MR717600}.
\end{remark}

%%%%%%%%%%%%%%%%%%%%%%%%%%%%%%%%%%%%%%%%%%%%%%%%%%%%%%%%%%%%%%%%%%%%%%%%
% Bibliography
%%%%%%%%%%%%%%%%%%%%%%%%%%%%%%%%%%%%%%%%%%%%%%%%%%%%%%%%%%%%%%%%%%%%%%%%

%% \begin{thebibliography}{99}
%% \itemsep=\smallskipamount
%% \end{thebibliography}

%% \bibliographystyle{plain}
%% \bibliography{Specialization}

\end{document}